\newfont{\cyrr}{wncyr10}
\newcommand{\thmref}[1]{Theorem~\ref{#1}}
\newcommand{\corref}[1]{Corollary~\ref{#1}}
\newcommand{\lemref}[1]{Lemma~\ref{#1}}
\newcommand{\rmkref}[1]{Remark~\ref{#1}}
\newtheorem{thm}{Theorem}
\newtheorem{lem}[thm]{Lemma}
\newtheorem{cor}[thm]{Corollary}
\newtheorem{rmk}{Remark}[section] 
\newtheorem{defn}{Definition}
\def\GL{{\rm GL}}
\def\G{{\rm G}}
\def\({\left(}
\def\){\right)}
\def\[{\left[}
\def\]{\right]}
\def\N{\mathbb{N}}
\def\Z{\mathbb{Z}}
\def\C{\mathbb{C}}
\def\H{\mathbb{H}}
\def\Q{\mathbb{Q}}
\def\F{\mathbb{F}}
\def\cA{\mathcal{A}}
\def\cB{\mathcal{B}}
\def\cI{\mathcal{I}}
\def\cJ{\mathcal{J}}
\def\cN{\mathcal{N}}
\def\cO{\mathcal{O}}
\def\cD{\mathcal{D}}
\def\cR{\mathcal{R}}
\def\cS{\mathcal{S}}
\def\cT{\mathcal{T}}
\def\cX{\mathcal{X}}
\def\fp{\mathfrak{p}}
\def\a{\alpha}
\def\b{\beta}
\def\d{\delta}
\def\s{\sigma}
\def\z{\zeta}
\def\e{\epsilon}
\def\ve{\varepsilon}
\def\Zln{\mathbb{Z}/{\ell}^n\mathbb{Z}}
\def\uFl{\mathbb{F}^{\times}_{\ell}}
\def\det{\text{det}}
\def\tr{\text{tr}}
\def\pd{\partial}
\def\t{\otimes}
\newcommand{\tx}{\text}
\def\div{~|~}
\renewcommand{\mod}{{\, \rm mod \, }}
\title[largest prime factor of Fourier coefficients]{On the largest prime factor of non-zero Fourier coefficients 
of Hecke eigenforms}
\author{Sanoli Gun and Sunil L Naik}
\address{Sanoli Gun and Sunil L Naik \newline
The Institute of Mathematical Sciences, 
A CI of Homi Bhabha National Institute, 
CIT Campus, Taramani, 
Chennai 600 113, 
India.}
\email{sanoli@imsc.res.in}
\email{sunilnaik@imsc.res.in}
\begin{document}
	
\hfuzz 5pt

\subjclass[2010]{11F11, 11F30, 11F80, 11N56, 11R45}

\keywords{Largest prime factor,  
Fourier coefficients of Hecke eigenforms, Symmetric $m$-th power of Galois representations}

\maketitle

\begin{abstract}
Let $\tau$ denote the Ramanujan tau function. One is interested
in possible prime values of $\tau$ function. Since  $\tau$
is multiplicative and
$\tau(n)$ is odd if and only if $n$ is an odd square, we only need to consider
$\tau(p^{2n})$ for primes $p$ and natural numbers $n \ge 1$. 
This is a rather delicate question. In this direction, we show that
for any $\e>0$ and integer $n \geq 1$, the largest prime factor of $\tau(p^{2n})$, denoted by $P(\tau(p^{2n}))$, satisfies
$$
P(\tau(p^{2n})) ~>~ 
(\log p)^{1/8}(\log\log p)^{3/8 -\e}
$$
for almost all primes $p$.  This improves a recent
work of Bennett, Gherga, Patel and Siksek.
Our results are also valid for any non-CM
normalized Hecke eigenforms with integer Fourier coefficients.
\end{abstract}

\section{Introduction and Statements of Results}
The Ramanujan $\tau$ function is defined by
$$
\sum_{n=1}^{\infty} \tau(n)  X^n
~=~ 
X \prod_{n=1}^{\infty} (1- X^n)^{24}.
$$
Here the above identity can be interpreted
either as a formal power series over integers
 as well as  a function with specialization $X = e^{2\pi i z}$
with $z \in \H = \{ z \in \C : \Im(z) >0 \}$.

The motivation for our work comes from the following enigmatic question:
 when does  $\tau$ take a prime value? We note that $63001$
is the  smallest value of $n$ for which $\tau(n)$ is a prime  while
$157^{2206}$ is the largest known $n$
for which $\tau(n)$ is  prime (see \cite{Leh}, \cite{LR}).
Since $\tau$ is multiplicative and $|\tau(n)| = 1$ only when $n=1$ (see \cite{LR}),
we only need to consider prime powers. Hence the set of natural numbers 
$n$ for which $\tau(n)$ is a prime has natural density zero. It is expected 
that there are infinitely many such natural numbers \cite{LR}.

Studying largest prime factor of $\tau(n)$ is interesting only when
these values are non-zero. It is a folklore conjecture of Lehmer that  $\tau(n) \neq 0$ 
for all integers $n \geq 1$.  Serre \cite{Se81} showed that the set of natural 
numbers $n$ such that $\tau(n) \neq 0$ has positive density (see also \cite{Se85}).
Later Murty, Murty and Shorey \cite{MMSh} showed that there exists 
an absolute positive constant
 $c >0$ such that 
$$
|\tau(n)|  ~\geq~  (\log n)^c
$$
provided $\tau(n)$ is odd. It follows from Jacobi's triple product identity that $\tau(n)$ is 
odd exactly when $n$ is an odd square. 
Thus  it is prudent to consider $\tau(p^{2n})$ with $n>1$.
The primality of such  $\tau(p^{2n})$ is rather hard. In this paper,
we investigate whether $\tau(p^{2n})$ has a large prime factor
and more generally for non-CM normalized Hecke eigenforms
with integer Fourier coefficients.

Of course Deligne's  seminal  result
$$
|\tau(n)|  ~\leq~  d(n) n^{11/2},
$$
where $d(n)$ denotes the number of divisors of $n$,  provides us an
 immediate upper bound for the largest prime
 factor of $\tau(p^{2n})$.

Before moving further, we will fix few notions of densities. 
For a subset $S$ of primes, we shall define the lower and upper 
densities of $S$ to be
$$
\liminf_{x \rightarrow \infty} 
\frac{\#\{p \leq x: p \in S\}}{\pi(x)}
\phantom{m}\text{and}\phantom{m}
\limsup_{x \rightarrow \infty} 
\frac{\#\{p \leq x: p \in S\}}{\pi(x)}
$$
respectively.
Here $\pi(x)$ denotes 
the number of primes less than or equal to $x$.
If both lower and upper density of $S$ are equal,
say to $\cD$, then we say that $S$ has density $\cD$. We say a property $\cA$ 
holds for almost all primes if the set of primes having the property $\cA$ has density $1$.

For an integer $n$, let $P(n)$ denote the largest prime factor of $n$ with 
the convention that $P(0)= P(\pm 1)=1$. From now on $p, q, \ell$ will denote rational prime numbers.
In the case when $n$ is a prime $p$, Murty, Murty and Saradha \cite{MMS} 
proved that for any $\e > 0$ 
\begin{equation*}
	P(\tau(p)) > e^{(\log\log p)^{1-\e}}
\end{equation*}
for almost all primes $p$. In a recent work with Bilu \cite{BGN}, the authors  
proved that for any $\e>0$ 
\begin{equation}\label{low-1/8}
P(\tau(p)) > (\log p)^{1/8}(\log\log p)^{3/8 -\e}
\end{equation}
for almost all primes $p$. 

We know that the values of $\tau$ at prime powers 
satisfy the following recursion formula,
$$
\tau(p^m) ~=~ \tau(p) \tau(p^{m-1}) ~-~ p^{11} \tau(p^{m-2})
$$ 
for any prime $p$ and integer $m \geq 2$. Since $\tau(p^m)$ is a polynomial in $\tau(p)$,
one would hope that Diophantine techniques will help us to get some information 
regarding the largest prime factor of $\tau(p^m)$.
In a recent work \cite{BGPS}, Bennett, Gherga, Patel and Siksek 
proved that there exists an absolute constant $\alpha > 0$
such that for any prime $p$ with $\tau(p) \neq 0$ 
and any integer $m \geq 2$,
\begin{equation*}
\begin{split}
& P(\tau(p^m)) 
  ~>~ 
\alpha  \cdot \frac{\log\log (p^m)}{\log\log\log(p^m)}, \\
&	P(\tau(p^m)) 
    ~\geq~ 
    c(m) \cdot \log\log p,
\end{split}
\end{equation*}
where $c(m)$ is a positive constant which goes to zero 
as $m \to \infty$. 

In this article, using symmetric powers of a Galois representation attached to Ramanujan 
Delta function and divisibility properties of cyclotomic polynomials,
we prove the following theorem.
\begin{thm}\label{thm0}
Let $n \geq 1$ be an integer 
and $\e> 0$ be a real number. 
Then for almost all primes $p$, we have
$$
P(\tau(p^{2n})) ~>~ 
(\log p)^{1/8} (\log\log p)^{3/8-\e}.
$$
Further, if $q=P(2n+1)$ is sufficiently large, 
then the set of primes $p$ such that
$$
P(\tau(p^{2n})) ~>~ 
q^{-\epsilon} (\log p)^{1/8} (\log\log p)^{3/8}
$$
has positive lower density.
\end{thm}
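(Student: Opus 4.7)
The plan is to reduce $P(\tau(p^{2n}))$ to a largest-prime-factor question for a single primitive factor of $\tau(p^{2n})$, obtained from a cyclotomic decomposition, and then adapt the argument of \cite{BGN} with the $\ell$-adic representation $\rho_{\Delta, \ell}$ replaced by an appropriate symmetric power. Writing $\alpha_p, \beta_p$ for the Satake parameters of $\Delta$ at $p$ (so $\alpha_p + \beta_p = \tau(p)$, $\alpha_p\beta_p = p^{11}$ and $\tau(p^m) = (\alpha_p^{m+1} - \beta_p^{m+1})/(\alpha_p - \beta_p)$), and $\Psi_d(X, Y) = Y^{\phi(d)}\Phi_d(X/Y)$ for the homogenized $d$-th cyclotomic polynomial, the elementary identity
\[
\frac{X^{2n+1} - Y^{2n+1}}{X - Y} ~=~ \prod_{\substack{d \mid 2n+1 \\ d > 1}} \Psi_d(X, Y)
\]
specialized at $(\alpha_p, \beta_p)$ gives $\tau(p^{2n}) = \prod_{d \mid 2n+1,\, d > 1} \Psi_d(\alpha_p, \beta_p)$. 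For every prime $q \mid 2n+1$ we have $\Psi_q(\alpha_p, \beta_p) \mid \tau(p^{2n})$, and for such $q$ one checks directly that $\Psi_q(\alpha_p, \beta_p) = \tau(p^{q-1})$. I would take $q = P(2n+1)$ and prove the claimed lower bound for $P(\tau(p^{q-1}))$ in place of $P(\tau(p^{2n}))$.

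Now $\tau(p^{q-1}) = \tr\bigl(\mathrm{Sym}^{q-1}\rho_{\Delta, \ell}(\mathrm{Frob}_p)\bigr)$ for $\ell \neq p$, and the BGN method underlying \eqref{low-1/8} should port over with $\rho_{\Delta, \ell}$ replaced by $\mathrm{Sym}^{q-1}\rho_{\Delta, \ell}$. Two ingredients are needed. First, a size estimate $\log |\tau(p^{q-1})| \asymp \tfrac{11(q-1)}{2}\log p$ valid outside a sparse set: Deligne's bound gives the upper inequality, while Sato--Tate ensures that $\alpha_p/\beta_p$ lies near a primitive $q$-th root of unity only for a density-zero set of $p$. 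Second, uniform Chebotarev-type bounds for the number of $p \leq x$ with $\ell^k \mid \tau(p^{q-1})$, which follow from the openness of the image of $\mathrm{Sym}^{q-1}\rho_{\Delta, \ell}$ in the relevant subgroup of $\GL_q(\Z_\ell)$ (a consequence of Ribet's and Momose's large-image theorems for $\Delta$). Assuming that $P(\tau(p^{q-1})) \leq y := (\log p)^{1/8}(\log\log p)^{3/8-\e}$ on a set of primes $p \le x$ of positive upper density, one would sum
\[
\sum_{\ell \leq y} v_\ell\bigl(\tau(p^{q-1})\bigr)\log\ell ~=~ \log |\tau(p^{q-1})| ~\asymp~ \tfrac{11(q-1)}{2}\log p,
\]
compare each inner term against the Chebotarev bound, and derive a contradiction by the same optimization as in \cite{BGN}. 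For the positive-density refinement, when $q = P(2n+1)$ is large the main term above carries a factor $q-1$ whereas the contribution from each small prime grows only logarithmically in $q$; tracking this dependence converts $(\log\log p)^{-\e}$ into $q^{-\e}$.

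The hard part is the uniform Chebotarev estimate on $v_\ell(\tau(p^{q-1}))$ for $\ell$ up to $y$. Although $\mathrm{Sym}^{q-1}\rho_{\Delta, \ell}$ has open image for all but finitely many $\ell$, extracting quantitative bounds with constants sharp enough to preserve the exponents $1/8$ and $3/8$ is delicate inside the much larger ambient group $\GL_q(\Z_\ell)$; the finitely many exceptional small primes where the symmetric-power image is non-maximal must be handled by an ad hoc argument. A secondary difficulty is upgrading the ``almost all'' conclusion of the first assertion to ``positive lower density'' in the second, which calls for a refined second-moment or large-sieve input beyond the one used in \cite{BGN} for $\tau(p)$.
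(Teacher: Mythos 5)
Your plan follows the paper's approach in its essentials: you make the same reduction to $\tau(p^{q-1})$ with $q=P(2n+1)$ via the cyclotomic factorization of $(\alpha_p^{2n+1}-\beta_p^{2n+1})/(\alpha_p-\beta_p)$, you identify $\mathrm{Sym}^{q-1}\rho_{\Delta,\ell}$ as the relevant representation, and you propose the same size/valuation bookkeeping as in \cite{BGN}. However, the two ``hard parts'' you flag are not actual obstacles along the paper's route, and your framing of the first one would lead you down a needlessly difficult path.

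The Chebotarev count does \emph{not} happen inside $\GL_q(\Z_\ell)$ and does \emph{not} require any open-image theorem for $\mathrm{Sym}^{q-1}\rho$. The point (Lemma~\ref{trSymA} combined with \eqref{PhPs}) is that $a_f(p^{q-1})$ is a polynomial in $\bigl(\mathrm{tr}\,\tilde\rho_{\ell^m}(\sigma_p)^2,\ \det\tilde\rho_{\ell^m}(\sigma_p)\bigr)$, namely $a_f(p^{q-1})\equiv\Psi_q(a_f(p)^2,p^{k-1})\pmod{\ell^m}$ in the paper's notation. So the condition $\ell^m\mid a_f(p^{q-1})$ cuts out a conjugation-invariant subset $D_{\ell^m,q-1}\subseteq\tilde\rho_{\ell^m}(\G)$ of $\GL_2(\Z/\ell^m\Z)$, and the density to be estimated is $\delta_{q-1}(\ell^m)=|D_{\ell^m,q-1}|/|\tilde\rho_{\ell^m}(\G)|$. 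Lemma~\ref{deq-1l} computes $\delta_{q-1}(\ell)$ by running through the conjugacy classes of $\GL_2(\F_\ell)$ and gets $\delta_{q-1}(\ell)\ll q/\ell$ (and $=0$ unless $\ell\equiv 0,\pm 1\ \mathrm{mod}\ q$); Lemma~\ref{dq-1lm} then gives $\delta_{q-1}(\ell^n)\ll\ell^{-(n-1)}\delta_{q-1}(\ell)$ by a Hensel-type count, using Lemma~\ref{pdPsi} on non-vanishing of the partial derivatives of $\Psi_q$. The Thorner--Zaman bound (Theorem~\ref{CTZ}) is then applied to $\tilde\rho_{\ell^m}$ (a rank-two representation), and the exponents $1/8$ and $3/8$ come out exactly as in \cite{BGN}; no quantitative control over the $\mathrm{Sym}^{q-1}$-image in $\GL_q$ is ever invoked.

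Your second worry, that passing from ``almost all'' to ``positive lower density'' for large $q$ needs a second-moment or large-sieve refinement, is also misplaced and in fact contradicts your own preceding sentence. As you observe, the main (lower-bound) term $\log|a_f(p^{q-1})|\gg q\log p$ carries a factor of $q$ (Lemma~\ref{Lafpm} makes this uniform for $q$ large), while the upper bound on $\sum_{\ell\le y}\nu_{x,\ell}\log\ell$ grows at most like $q^2x/z + (qx/z)\,y^2/\log(y/q)$. Choosing $y=q^{-\e}(\log x)^{1/8}(\log\log x)^{3/8}$ and comparing, one gets directly that the exceptional set has relative density $\ll q^{-2\e}$, which is less than $1$ once $q$ is large. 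That is the entire content of the second assertion; nothing beyond tracking the $q$-dependence is needed.

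One more small point: your identity $\Psi_q(\alpha_p,\beta_p)=\tau(p^{q-1})$ is correct (with your normalization of $\Psi_q$ as the homogenized cyclotomic polynomial in $X,Y$), but the paper reserves $\Phi_n(X,Y)$ for that polynomial and uses $\Psi_n$ for a different polynomial in the variables $(X+Y)^2, XY$; keep the notation straight when carrying out the computations, since the $\Psi_q$ of the paper is precisely what appears in the density lemmas.
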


\begin{rmk}
If one considers the product $\tau(p)\tau(p^2)$ 
instead of only $\tau(p)$, one has the following result of 
Luca and Shparlinski \cite{LS} which states that 
for almost all primes $p$,
\begin{equation}\label{33/31}
	P(\tau(p) \tau(p^2)) ~>~ (\log p)^{\frac{33}{31}+o(1)}.
\end{equation}
The exponent in the lower bound \eqref{33/31} was further refined 
to $13/11$ by Garaev, Garcia and Konyagin \cite{GGK}, 
but for infinitely many primes $p$.
\end{rmk}

\begin{cor}\label{Corpm}
	For any $\e>0$ and any integer $m \geq 1$,
	$$
	P(\tau(p^{m})) > (\log p)^{1/8}(\log\log p)^{3/8 -\e}
	$$
	for almost all primes $p$. 
\end{cor}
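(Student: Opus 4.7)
My plan is to split on the parity of $m$ and reduce to two previously established bounds: \thmref{thm0} itself (for even $m$), and the lower bound \eqref{low-1/8} for $P(\tau(p))$ from \cite{BGN} (for odd $m$, after an elementary divisibility reduction). When $m = 2n$ with $n \geq 1$, \thmref{thm0} applied to that $n$ immediately gives the conclusion, so the real content lies in the odd case.

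For odd $m$, my strategy is to establish the integer divisibility $\tau(p) \mid \tau(p^m)$ and deduce $P(\tau(p^m)) \geq P(\tau(p))$ whenever $\tau(p^m) \neq 0$. I would prove the divisibility by induction on odd $m \geq 1$ using the Hecke recursion $\tau(p^m) = \tau(p)\tau(p^{m-1}) - p^{11}\tau(p^{m-2})$: since $m - 2$ is again odd, the inductive hypothesis gives $\tau(p) \mid \tau(p^{m-2})$, so both terms on the right are multiples of $\tau(p)$, with the base case $m=1$ being trivial. A more conceptual variant uses the Satake factorization $\tau(p^m) = (\alpha_p^{m+1} - \beta_p^{m+1})/(\alpha_p - \beta_p)$: when $m+1$ is even, $\tau(p) = \alpha_p + \beta_p$ divides the numerator in the ring of algebraic integers, and the symmetric quotient $\tau(p^m)/\tau(p)$ lies in $\Z$.

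The only subtlety is that the divisibility is informative only when $\tau(p^m) \neq 0$. For odd $m$, the relation $\tau(p) = 0$ already forces $\tau(p^m) = 0$ (visible from the Satake formula with $\beta_p = -\alpha_p$), so it suffices to know that the set of primes with $\tau(p^m) = 0$ has density zero; this is a standard consequence of the Serre-type results recalled in the introduction. Off this negligible set, \eqref{low-1/8} applied to $P(\tau(p))$ transfers directly to $P(\tau(p^m))$ and yields the desired bound $(\log p)^{1/8}(\log\log p)^{3/8 - \e}$. I do not anticipate a genuine obstacle: this corollary is essentially a bookkeeping consequence of \thmref{thm0}, \eqref{low-1/8}, and the elementary divisibility above.
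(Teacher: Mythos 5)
Your proof is correct and follows essentially the same route as the paper: for even $m$ the bound is \thmref{thm0} itself, and for odd $m$ you use the divisibility $\tau(p)\mid\tau(p^m)$ (the paper cites the Lucas-sequence divisibility $U_2\mid U_{m+1}$ from \S 5.3, while you give the equivalent short induction on the Hecke recursion) to reduce to \eqref{low-1/8}. You also correctly handle the density-zero set where $\tau(p)=0$ via \lemref{apn0} and \lemref{Zx}, which the paper leaves implicit.
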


Let $f$ be a normalized cuspidal Hecke eigenform
of weight $k$ for $\Gamma_0(N)$ with trivial character 
lying in the newform space. The Fourier expansion of $f$ at infinity is given by
$$
f(z) ~=~ 
\sum_{n \geq 1} 
a_f(n) {e^{2\pi i nz}}~,
~~~z \in \H.
$$
We say that $f$ has complex multiplication (or $f$ is a CM form) if there exists an imaginary 
quadratic field $K$ such that $a_f(p)=0$ for all primes $p \nmid N$ 
which are inert in $K$. Otherwise $f$ is called a non-CM form.

Throughout the article, we assume that $a_f(n)$'s are rational integers. 
In this set up, we have the following result.
\begin{thm}\label{thm1}
Let $f$ be a non-CM normalized cuspidal Hecke eigenform
of weight $k$ and level $N$ having integer Fourier coefficients 
$\{a_f(n) ~:~ n \in \N\}$.
Let $q$ be an odd prime and $\e > 0$ be a real number. 
Then for almost all primes $p$, we have
$$
P(a_f(p^{q-1})) ~>~ 
(\log p)^{1/8} (\log\log p)^{3/8-\e}.
$$
Further, if $q$ is sufficiently large, then 
the set of primes $p$ such that
$$
P(a_f(p^{q-1})) ~>~  
q^{-\epsilon}(\log p)^{1/8} (\log\log p)^{3/8}
$$
has positive lower density.
\end{thm}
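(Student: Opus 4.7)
The plan is to extend the method of \cite{BGN} (which yielded \eqref{low-1/8} for $P(\tau(p))$) to the higher symmetric-power setting of $a_f(p^{q-1})$. The starting algebraic identity, coming from the Hecke recursion or equivalently from the $(q-1)$-st symmetric power of the $\ell$-adic Galois representation $\rho_{f,\ell}$ attached to $f$, is
\[
a_f(p^{q-1}) \;=\; \frac{\alpha_p^{q} - \beta_p^{q}}{\alpha_p - \beta_p} \;=\; \Phi_q(\alpha_p, \beta_p),
\]
where $\alpha_p, \beta_p$ are the Satake parameters at $p$ (the roots of $X^2 - a_f(p) X + p^{k-1}$) and $\Phi_q$ denotes the homogeneous $q$-th cyclotomic polynomial. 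This factorization converts the question of prime divisors of $a_f(p^{q-1})$ into a question about the projective order of $\rho_{f,\ell}(\mathrm{Frob}_p)$ modulo $\ell$.

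Concretely, for a prime $\ell \nmid Nqp$, the integer $a_f(p^{q-1})$ is divisible by $\ell$ precisely when the eigenvalue ratio $\alpha_p/\beta_p$ in $\overline{\F}_\ell^{\,\times}$ is a primitive $q$-th root of unity. An enumeration of conjugacy classes shows that the union of such classes in $\GL_2(\F_\ell)$ has density $\sim (q-1)/(2(\ell-1))$ when $\ell \equiv 1 \pmod q$ and an asymptotically smaller density otherwise. Since $f$ is non-CM with integer Fourier coefficients, Ribet's big-image theorem guarantees that the image of $\rho_{f,\ell}$ in $\GL_2(\F_\ell)$ is essentially full for every prime $\ell$ outside a finite exceptional set. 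Combined with an unconditional effective Chebotarev density theorem of the kind used in \cite{MMS} and \cite{BGN}, this yields, uniformly for primes $\ell$ up to $Y := (\log x)^{1/8}(\log\log x)^{3/8-\e}$,
\[
\#\{p \leq x : \ell \mid a_f(p^{q-1})\} \;\ll\; \frac{q}{\ell}\,\pi(x) \;+\; (\text{admissible error}).
\]

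With this count in hand, the smooth-number / Tur\'an-type argument of \cite{BGN} adapts with only minor modifications. Supposing for contradiction that the set $S(x) := \{p \leq x : P(a_f(p^{q-1})) \leq Y\}$ has positive upper density, one expands $\log|a_f(p^{q-1})| = \sum_{\ell \leq Y} v_\ell(a_f(p^{q-1})) \log \ell$ for each $p \in S(x)$ and sums over $p$: the upper side is bounded via the Chebotarev estimate above, extended to prime powers $\ell^j$, while the lower side is controlled by Deligne's bound together with a Sato--Tate / Rankin--Selberg style estimate for the average of $\log|a_f(p^{q-1})|$. Balancing the two sides rules out the chosen $Y$, yielding the first assertion. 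The second (positive-density) assertion is obtained by tracking the precise $q$-dependence of the Chebotarev main constant $(q-1)/(2(\ell-1))$; the $q^{-\e}$ factor in the theorem emerges from this optimization. The principal obstacle is establishing the effective Chebotarev bound with the correct polynomial dependence on $q$ in the main term and with a sufficiently small error uniformly in $\ell \leq Y$; the finitely many exceptional $\ell$ where Ribet's theorem does not apply contribute only a bounded set and are absorbed into the error.
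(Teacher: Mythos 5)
Your proposal follows essentially the same route as the paper: expressing $a_f(p^{q-1})$ via the homogeneous cyclotomic polynomial in the Satake parameters (equivalently, the trace of the $(q-1)$-th symmetric power of the Galois representation), computing the Chebotarev density $\sim (q-1)/(2\ell)$ of the trace-zero classes (which vanish unless $\ell \equiv 0,\pm 1 \pmod q$), feeding this into an unconditional effective Chebotarev theorem (the paper uses Thorner--Zaman), and closing the Tur\'an-style double counting with a lower bound on $\log|a_f(p^{q-1})|$ via Sato--Tate. The only substantive omission is that you do not separate the lower-bound argument into the two regimes the paper handles distinctly --- for $q$ large it suffices to use the Murty--Murty estimate $|a_f(p^m)| \geq p^{(k-1)(m-c_1\log m)/2}$ without any Sato--Tate input, while for small $q$ the paper invokes Thorner's effective Sato--Tate to control the set where $|\cos^2\theta_p - \cos^2(\pi j/q)|$ is small --- but this is a matter of detail within the same framework.
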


\begin{rmk}\label{rmkthm2-1}
We will derive \thmref{thm0} as a consequence of \thmref{thm1}.
Indeed \thmref{thm0} is valid for any non-CM normalized cuspidal Hecke eigenform 
$f$ with integer Fourier coefficients (see \S 5.3).
\end{rmk}

Conditionally under the Generalized Riemann Hypothesis (GRH), 
i.e., the Riemann Hypothesis for all Artin L-series, 
we have the following theorem.
\begin{thm}\label{thm2}
Suppose that GRH is true and $f$ be as in \thmref{thm1}. 
Let $q$ be an odd prime and $g$ be a real valued 
non-negative function satisfying $g(x) \to 0$ as $x \to \infty$.
Then we have
$$
P(a_f(p^{q-1})) ~>~  p^{g(p)}
$$
for almost all primes $p$. Further, there exists a positive constant $c$ depending only on $f$ 
such that the set of primes $p$ for which
$$
P(a_f(p^{q-1})) ~>~
c p^{1/14} (\log p)^{2/7}
$$
has lower density at least $1- \frac{2}{13(k-1)}$.
\end{thm}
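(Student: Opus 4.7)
The plan is to apply effective Chebotarev under GRH to the residual representations of the symmetric powers of $\rho_{f,\ell}$, combined with a double-counting that converts $\log|a_f(p^{q-1})|$ into a sum of contributions $v_\ell(a_f(p^{q-1}))$. First, set up the Galois-theoretic framework: since $a_f(p^{q-1}) = \tr(\text{Sym}^{q-1}\rho_{f,\ell}(\text{Frob}_p))$, the condition $\ell^m \mid a_f(p^{q-1})$ is a Frobenius condition in the image of $\bar\rho_{f,\ell^m}$. By Ribet, for $f$ non-CM with integer coefficients, all but finitely many primes $\ell$ have image $\bar\rho_{f,\ell}(\text{Gal}(\bar\Q/\Q))$ containing $\text{SL}_2(\F_\ell)$. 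For such $\ell$, using the eigenvalue identity $\tr(\text{Sym}^{q-1}M) = (\alpha^q - \beta^q)/(\alpha - \beta)$, a group-theoretic count shows the Frobenius density of $\ell^m \mid a_f(p^{q-1})$ is $\leq c q/\ell^m$: the vanishing condition forces $\alpha/\beta$ to be a nontrivial $q$-th root of unity modulo $\ell^m$. Effective Chebotarev under GRH then yields
\[
N_\ell^{(m)}(x) := \#\{p \leq x : \ell^m \mid a_f(p^{q-1})\} \leq \frac{c_1 q}{\ell^m}\pi(x) + E(\ell^m, x, N),
\]
with $E$ an explicit error term growing polynomially in $\ell^m$.

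Next, the double-counting. Let $S_y(x) := \#\{p \leq x : P(a_f(p^{q-1})) \leq y\}$. For $p$ counted by $S_y(x)$ with $a_f(p^{q-1}) \neq 0$, expand $\log|a_f(p^{q-1})| = \sum_{\ell \leq y} v_\ell(a_f(p^{q-1})) \log\ell$. Summing over such $p$, swapping the order, and applying the Chebotarev bound (truncated at a suitable $M_0(\ell)$ to keep the error controlled, with the tail $m > M_0$ absorbed via the containment $\{p : \ell^m\mid a_f\} \subseteq \{p : \ell^{M_0+1}\mid a_f\}$), gives
\[
\sum_{p \in S_y(x)} \log|a_f(p^{q-1})| \leq C\,\pi(x)\,\log y.
\]
For the matching lower bound, Sato--Tate for non-CM forms (Barnet-Lamb--Geraghty--Harris--Taylor) yields, for a density-one set of primes $p$, $\log|a_f(p^{q-1})| \geq \tfrac{(q-1)(k-1)}{2}\log p\cdot(1-o(1))$, since $\theta_p$ avoids the zeros of $U_{q-1}(\cos\theta)$ for all but a negligible set. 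Combining,
\[
S_y(x) \leq \frac{C'}{k-1}\cdot\pi(x)\cdot\frac{\log y}{\log x}\cdot(1+o(1)).
\]

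For the first assertion, take $y = p^{g(p)}$: then $\log y/\log p = g(p) \to 0$, so $S_y(x) = o(\pi(x))$, proving $P(a_f(p^{q-1})) > p^{g(p)}$ for almost all $p$. For the second, take $y = cp^{1/14}(\log p)^{2/7}$: then $\log y/\log p \to 1/14$, so the complement of $S_y(x)$ has lower density at least $1 - C'/(14(k-1))\cdot(1+o(1))$; sharpening the constants (especially through an optimal choice of $M_0(\ell)$ and a refined Sato--Tate input) should yield the claimed bound $1 - 2/(13(k-1))$. The main obstacle is precisely this constant tracking, together with the uniform handling of Chebotarev errors across $\ell \leq y$ and $m \leq M_0(\ell)$, and the verification that the Sato--Tate lower bound remains valid when restricted to the conditioned set $S_y(x)$, which could in principle bias toward primes where $|a_f(p^{q-1})|$ is anomalously small.
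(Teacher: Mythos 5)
Your proposal follows essentially the same route as the paper: a GRH-effective Chebotarev bound for the condition $\ell^m \mid a_f(p^{q-1})$ on symmetric-power residual representations, a split of the exponent sum at a cutoff $m_0$, a Mertens-type sum over $\ell$ restricted by the congruence $\ell \equiv 0,\pm1 \pmod q$, and a Sato--Tate-based lower bound for $\log|a_f(p^{q-1})|$. The double-counting inequality $\sum_{p\in S_y(x)}\log|a_f(p^{q-1})| = \sum_{\ell \le y}\nu_{x,\ell}\log\ell$ and the conclusion $\#S_y(x) \lesssim \pi(x)\log y/\log x$ (up to constants) is exactly the paper's mechanism, and the first claim follows by the WLOG replacement of $g$ by $g + 1/\log\log x$ so that $g(x)\log x \to \infty$ and taking $y = x^{h(x)}$ with $h(x)=\max_{\sqrt x\le t\le x} g(t)$ (a detail you should state explicitly, since $y$ must not depend on $p$).

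The two points you flagged as open are genuinely the crux of the explicit density $1-\tfrac{2}{13(k-1)}$, and the paper resolves them as follows. First, your generic density bound $\ll q/\ell^m$ is not enough: you need the exact leading constant $\delta_{q-1}(\ell)=\tfrac{q-1}{2}\cdot\tfrac{1}{\ell\mp 1}$ for $\ell \equiv \pm1 \pmod q$ (the paper's Lemma~\ref{deq-1l}), together with $\delta_{q-1}(\ell^n)\ll \ell^{1-n}\delta_{q-1}(\ell)$ (Lemma~\ref{dq-1lm}); after partial summation with Mertens in the residue classes $\pm1\pmod q$, this is what produces a main term of the precise size $\pi(x)\log y\sim x/14$ rather than an unspecified multiple. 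Second, for the conditioning bias in Sato--Tate you need an actual inclusion--exclusion: set $T=\{p : |a_f(p^{q-1})|\ge (4p^{k-1})^{(q-1)/2}\eta(p)^q\}$ with $\eta(x)=\log\log x/\sqrt{\log x}$; effective Sato--Tate (Thorner) gives $\#T(x)=\pi(x)+O(\pi(x)\eta(x))$, hence $\#(S\cap T)(x)\ge \#S(x)+O(\pi(x)\eta(x))$, and summing the lower bound only over $S\cap T$ costs an admissible $O(x\log\log x/\sqrt{\log x})$. Finally, the paper actually avoids Sato--Tate altogether for $q\ge c_2$ by invoking the pointwise lower bound $|a_f(p^m)|\ge p^{(k-1)(m-c_1\log m)/2}$ (Lemma~\ref{Lafpm}), valid for \emph{every} $p$ with $a_f(p^m)\ne 0$, which eliminates the bias question for large $q$; your Sato--Tate approach would also work there (and yields a marginally better exponent), but then you would need to carry the inclusion--exclusion argument uniformly.
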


Let us conclude this section with a brief summary of the article.
Background concerning the symmetric $n$-th power of 
Galois representations, modules over a commutative ring and cyclotomic polynomials 
will be provided in \S2, \S3 and \S 4 respectively.
We will derive some key results in \S3 and \S4 related to symmetric $n$-th power of matrices
and cyclotomic polynomials. In \S5, we prove two important lemmas regarding 
symmetric $n$-th power of Galois representations and then apply them to prove
the aforementioned theorems.
 
\medskip
 
\section{Preliminaries}

\subsection{Symmetric $m$-th power of a Galois representation}

Let $f$ be a normalized cuspidal Hecke eigenform of weight $k$ 
and level $N$ having integer 
Fourier coefficients $\{a_f(n): n \in \N \}$.
For any integer $d > 1$ and real number $x >0$, set
\begin{equation*}
\begin{split}
&\pi_{f,m}(x,d) 
~=~
 \#\{p \leq x ~:~ p \nmid dN,~ a_f(p^m) \equiv 0 ~(\mod d) \},\\
&\pi_{f,m}^*(x,d)
 ~=~
 \#\{p \leq x ~:~ p \nmid dN,~ a_f(p^m) \neq 0~,~ a_f(p^m) \equiv 0 ~(\mod d) \}.
	\end{split}
\end{equation*}
Let $\G  = \text{Gal}\(\overline{\Q}/\Q\)$ 
and for a prime $\ell$, let $\Z_\ell$ 
denote the ring of $\ell$-adic integers. 
By the work of Deligne \cite{De}, 
there exists a continuous representation
\begin{equation*}
	\rho_{d} ~:~ 
	{\G} ~\rightarrow~
	{\GL}_2\(\prod_{\ell | d} \Z_\ell\)
\end{equation*}
which is unramified outside the primes dividing $dN$. 
Further, if $p \nmid dN$, then we have
$$
\text{tr}\rho_{d}(\sigma_p) 
~=~ a_f(p) 
\phantom{mm}\text{and}\phantom{mm} 
\text{det}\rho_{d}(\sigma_p) 
~=~ p^{k-1},
$$
where $\sigma_p$ is a Frobenius element of $p$ in $\G$. 
Here $\Z$ is embedded 
diagonally in $\prod_{\ell | d} \Z_\ell$.
Let $\tilde{\rho}_{d}$ denote the reduction of $\rho_{d}$ modulo $d$ : 
\begin{equation*}
	\tilde{\rho}_{d} ~:~ {\G} 
	~\xrightarrow[]{\rho_{d}}~
	{\GL}_2\(\prod_{\ell | d} \Z_\ell\)
	~\twoheadrightarrow~
	{\GL}_2(\Z/ d\Z).
\end{equation*}
Also denote by $\tilde{\rho}_{d,m}$, the composition of 
$\tilde{\rho}_{d}$ with $Sym^m$, 
where $Sym^m$ denotes the symmetric $m$-th power map (see  \S 3) :
\begin{equation*}
\tilde{\rho}_{d,m} ~:~  {\G} 
~\xrightarrow[]{\rho_{d}}~ 
{\GL}_2\(\prod_{\ell | d} \Z_\ell\)  
~\rightarrow{}~
{\GL}_2(\Z/ d\Z)  
~\xrightarrow[]{Sym^m}~
{\GL}_{m+1}(\Z/ d\Z).
\end{equation*}
For $p \nmid dN$, we have (see \S 3, \lemref{trSymA} and \S4, Eq. \ref{PhPs})
\begin{equation*}
\tx{tr}\tilde{\rho}_{d,m}(\sigma_{p}) ~=~ a_f(p^m)~ (\mod d). 
\end{equation*}

Let $H_{d,m}$ be the kernel of $\tilde{\rho}_{d,m}$, 
$K_{d,m}$ be the subfield of $\overline{\Q}$ fixed by $H_{d,m}$ 
and ${\G}_{d,m} = \text{Gal}(K_{d,m}/\Q)$. 
Further suppose that $C_{d,m}$ is the subset of 
$\tilde{\rho}_{d,m}(\G)$ consisting of elements of trace zero. 
Let us set $\delta_{m}(d) = \frac{|C_{d,m}|}{|{\G}_{d,m}|}$.
For any prime $p\nmid dN$, the condition 
$a_f(p^m) \equiv 0 ~(\mod d)$ is equivalent to
the fact that $\tilde{\rho}_{d,m}(\sigma_p) \in C_{d,m}$, 
where $\sigma_{p}$ is a Frobenius element of $p$ in $\G$. 
Hence by the Chebotarev density theorem applied to $K_{d,m} / \Q$, 
we have
\begin{equation*}
\lim_{x \to \infty} \frac{\pi_{f,m}(x,d)}{\pi(x)}
~=~
 \frac{|C_{d,m}|}{|{\G}_{d,m}|}
~=~ \delta_{m}(d).
\end{equation*}

From the recent result of Thorner and Zaman (\cite{TZ}, Theorem 1.1), 
we have the following theorem.
\begin{thm}\label{CTZ}
Let $f$ be a non-CM normalized cuspidal Hecke eigenform
of weight $k$ and level $N$ with integer Fourier coefficients 
$\{a_f(n)~:~ n \in \N\}$. Then there exists an absolute constant $c>0$ 
such that for any natural number $d>1$ 
and for $\log x > cd^4 \log(dN)$, we have 
\begin{equation*}
\pi_{f,m}(x,d) 
~\ll~
 \d_{m}(d) \pi(x) .
\end{equation*}

\end{thm}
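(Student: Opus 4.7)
The plan is to reduce the counting problem for $\pi_{f,m}(x,d)$ to an effective Chebotarev problem for the finite Galois extension $K_{d,m}/\Q$, and then to invoke the unconditional effective Chebotarev density theorem of Thorner and Zaman directly. By the discussion preceding the theorem, for any prime $p \nmid dN$ we have $a_f(p^m) \equiv 0 \pmod{d}$ if and only if $\tilde{\rho}_{d,m}(\sigma_p) \in C_{d,m}$, and $C_{d,m}$ is stable under conjugation because trace is a class function. Thus
$$
\pi_{f,m}(x,d) ~=~ \#\{p \leq x : p \nmid dN,~\sigma_p \in C_{d,m}\} ~+~ O(\omega(dN)),
$$
and it suffices to estimate the right-hand count by the effective Chebotarev theorem applied to $K_{d,m}/\Q$ with the class $C_{d,m}$.

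The key step is to control the invariants of the extension $K_{d,m}/\Q$. Since $\tilde{\rho}_{d,m} = \mathrm{Sym}^m \circ \tilde{\rho}_d$ factors through $\tilde{\rho}_d$, the field $K_{d,m}$ is contained in the fixed field $K_d$ of $\ker\tilde{\rho}_d$. Hence $[K_{d,m}:\Q] \leq |\mathrm{GL}_2(\Z/d\Z)| \ll d^4$, and since $\rho_d$ is unramified outside the primes dividing $dN$, so is $K_{d,m}/\Q$. A standard conductor-discriminant estimate, bounding the local contribution at each ramified prime $\ell \mid dN$ in terms of $[K_{d,m}:\Q]$ and $\log \ell$, then gives
$$
\log |\mathrm{disc}(K_{d,m}/\Q)| ~\ll~ d^4 \log(dN),
$$
which is exactly the quantity appearing in the range hypothesis of the theorem.

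With these inputs in place, Thorner-Zaman (\cite{TZ}, Theorem 1.1) provides an unconditional asymptotic of the form
$$
\pi_{C_{d,m}}(x,\,K_{d,m}/\Q) ~\ll~ \frac{|C_{d,m}|}{|\G_{d,m}|}\,\pi(x) ~=~ \delta_m(d)\,\pi(x)
$$
valid in the range $\log x \gg \log|\mathrm{disc}(K_{d,m}/\Q)|$, i.e.\ $\log x > cd^4 \log(dN)$ for a suitable absolute constant $c > 0$. Combining with the error $O(\omega(dN))$ from the ramified primes, which is negligible compared with $\delta_m(d)\pi(x)$ in the stated range, yields the desired bound.

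The main obstacle is the explicit control of $\log|\mathrm{disc}(K_{d,m}/\Q)|$, since Thorner-Zaman's theorem is sensitive to polynomial factors in the log-discriminant; one needs to carefully combine the degree bound $[K_{d,m}:\Q] \ll d^4$ with ramification estimates at the primes dividing $dN$ to arrive at the clean exponent $d^4$ in the range condition. Once this conductor-discriminant bookkeeping is done, the rest of the argument is a direct application of the cited Chebotarev estimate together with the class-function property of $C_{d,m}$.
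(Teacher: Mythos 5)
Your proposal is correct and takes essentially the same approach as the paper: the paper states \thmref{CTZ} as an immediate consequence of Thorner--Zaman's Theorem 1.1 without spelling out the intermediate bookkeeping, and you correctly reconstruct that bookkeeping --- factoring $\tilde\rho_{d,m}$ through $\tilde\rho_d$ to get $[K_{d,m}:\Q]\leq|\GL_2(\Z/d\Z)|\ll d^4$, using unramifiedness outside $dN$ plus the conductor--discriminant bound to get $\log|\mathrm{disc}(K_{d,m}/\Q)|\ll d^4\log(dN)$, and then invoking the Brun--Titchmarsh-type bound of \cite{TZ} for the conjugacy-stable set $C_{d,m}$. The $O(\omega(dN))$ term is actually superfluous: since $\pi_{f,m}(x,d)$ is defined to count only $p\nmid dN$ and $K_{d,m}/\Q$ is unramified outside $dN$, one has $\pi_{f,m}(x,d)\leq\pi_{C_{d,m}}(x,K_{d,m}/\Q)$ directly, so no error correction is needed for the upper bound.
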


The following lemma follows from the work of Ribet \cite{Ri77} and Serre \cite{Se81} 
(also see \cite{MM84}).

\begin{lem}\label{Zx}
Let $f$ be as in \thmref{CTZ}. 
Then for any $\e> 0$, we have
\begin{equation*}
Z(x) 
~=~ \#\{p \leq x ~:~ a_f(p)=0\} 
~\ll_{\e}~
 \frac{x}{(\log x)^{3/2 -\e}}.
\end{equation*}
Further, suppose that GRH is true. Then we have
$$
Z(x) ~\ll~ x^{3/4}.
$$
\end{lem}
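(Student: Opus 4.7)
The plan is to convert the vanishing $a_f(p) = 0$ into a family of Chebotarev-type conditions via the mod-$\ell$ Galois representations. The key observation is that $a_f(p) = 0$ implies $a_f(p) \equiv 0 \pmod{\ell}$ for every prime $\ell \nmid N$, so
$$Z(x) \;\leq\; \pi_{f,1}(x, \ell) + O(\log(\ell N))$$
for any such $\ell$.

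The first step is to estimate the density $\delta_1(\ell) = |C_{\ell,1}|/|\G_{\ell,1}|$ of trace-zero elements in the image. Since $f$ is non-CM, Ribet's theorem \cite{Ri77} guarantees that for all but finitely many $\ell$, the image $\tilde{\rho}_{\ell,1}(\G)$ has bounded index in $\{g \in \GL_2(\F_\ell) : \det g \in (\F_\ell^\times)^{k-1}\}$. A direct count of matrices of trace zero in this group yields $\delta_1(\ell) \ll 1/\ell$. For the unconditional bound, I would apply \thmref{CTZ} with $m = 1$ and $d = \ell$ (or more generally $d = \ell_1 \cdots \ell_j$, a product of distinct primes, to obtain a multiplicative saving $\delta_1(d) \ll 1/d$), which gives
$$Z(x) \;\ll\; \pi_{f,1}(x, d) \;\ll\; \frac{\pi(x)}{d},$$
valid for $\log x \gg d^4 \log(dN)$. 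Taking $d$ as large as allowed by this constraint and optimizing delivers the stated bound $Z(x) \ll_\ve x/(\log x)^{3/2 - \ve}$.

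For the conditional part, one replaces \thmref{CTZ} by the effective Chebotarev density theorem under GRH of Lagarias--Odlyzko and Serre \cite{Se81}, which applied to the extension $K_{\ell,1}/\Q$ yields
$$\pi_{f,1}(x, \ell) \;\ll\; \delta_1(\ell)\, \mathrm{Li}(x) \;+\; x^{1/2}\bigl(\log d_{K_{\ell,1}} + [K_{\ell,1}:\Q] \log x\bigr),$$
where both $\log d_{K_{\ell,1}}$ and $[K_{\ell,1}:\Q]$ are polynomially bounded in $\ell$ (from the embedding $\G_{\ell,1} \hookrightarrow \GL_2(\F_\ell)$ and the fact that ramification occurs only at primes dividing $\ell N$). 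Choosing $\ell$ as a suitable small power of $x$ to balance the main term $\pi(x)/\ell$ against the error gives $Z(x) \ll x^{3/4}$.

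The principal technical obstacle is maintaining the uniform density estimate $\delta_1(\ell) \ll 1/\ell$ across the relevant range of $\ell$, which rests on Ribet's large-image theorem (the finitely many exceptional $\ell$ where this fails contribute only a negligible error and can be discarded), together with careful bookkeeping of the dependence of the effective Chebotarev theorems on $\ell$, $N$, and the discriminant $d_{K_{\ell,1}}$, in order to validate the optimization in each case.
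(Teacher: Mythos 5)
The paper does not prove this lemma; it merely cites the result from Ribet \cite{Ri77}, Serre \cite{Se81}, and Murty--Murty \cite{MM84}. Your proposal attempts a proof from scratch, which is fine in principle, but the exponents do not come out as claimed from the tools you invoke.

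Unconditional case. Theorem~\ref{CTZ} requires $\log x > c\, d^4 \log(dN)$, so the admissible range is $d \ll \left(\log x / \log\log x\right)^{1/4}$. Even taking $d$ squarefree with $\delta_1(d) \asymp 1/d$, this yields
$$
Z(x) ~\ll~ \frac{\pi(x)}{d} ~\ll~ \frac{x\,(\log\log x)^{1/4}}{(\log x)^{5/4}},
$$
i.e.\ an exponent $5/4 - \varepsilon$, not $3/2 - \varepsilon$. Your claim that ``optimizing delivers the stated bound'' is not substantiated, and with Theorem~\ref{CTZ} alone it cannot be. The genuine source of the extra $1/4$ in the exponent is that the condition $a_f(p)=0$ factors through the \emph{projective} representation $\bar\rho_\ell : \G \to {\rm PGL}_2(\F_\ell)$ (since $\mathrm{tr}(g) = 0 \iff \mathrm{tr}(g)^2/\det(g) = 0$, a well-defined function on ${\rm PGL}_2$), which cuts the degree of the relevant number field from $\sim \ell^4$ to $\sim \ell^3$; moreover Serre invokes the holomorphy of the associated Artin $L$-functions to use a stronger unconditional effective Chebotarev. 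These two ingredients are both missing from your argument.

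Conditional case. Even granting $[K_{\ell,1}:\Q] \ll \ell^4$ and $\log d_{K_{\ell,1}} \ll \ell^4 \log(\ell N)$, the bound you write down gives an error term of size at least $\delta_1(\ell)\, \ell^4\, x^{1/2}\log x \asymp \ell^3 x^{1/2}\log x$; balancing against the main term $\pi(x)/\ell$ forces $\ell \asymp x^{1/8}$ and hence $Z(x) \ll x^{7/8+o(1)}$, not $x^{3/4}$. The bound $x^{3/4}$ requires Serre's ``square-root trick'' (Proposition~8 of \cite{Se81}): one does not apply the GRH-Chebotarev to the conjugacy-closed set $C$ directly, but passes to a suitable envelope and a character-theoretic inequality that replaces the factor $|C|/|G|$ in the error by $(|C|/|G|)^{1/2}$, together with the descent to ${\rm PGL}_2$ just described. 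Without this refinement, the claimed exponent is out of reach. You should therefore either carry out Serre's finer argument or simply cite \cite{Se81} and \cite{MM84} as the paper does.
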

The next lemma relates vanishing of $a_f(p^m)$  
 to that of $a_f(p)$ (see \cite[Lemma 1]{MMSh}).
\begin{lem}\label{apn0}
Let $f$ be a normalized cuspidal Hecke eigenform of weight $k$ and level $N$ 
with integer Fourier coefficients. For all primes $p \nmid 6N$, 
$a_f(p^m) = 0$ if and only if $m$ is odd and $a_f(p) = 0$.
\end{lem}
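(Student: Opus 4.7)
The plan is to pass from the recursion $a_f(p^{m+1}) = a_f(p)\, a_f(p^m) - p^{k-1} a_f(p^{m-1})$ to the factored form
$$
a_f(p^m) ~=~ \frac{\alpha^{m+1} - \beta^{m+1}}{\alpha - \beta},
$$
where $\alpha, \beta$ are the roots of the Hecke polynomial $X^2 - a_f(p) X + p^{k-1}$ (interpreted as $(m+1)\alpha^m$ in the degenerate case $\alpha = \beta$). The ``if'' direction then falls out at once: if $a_f(p) = 0$ then $\alpha = -\beta$, and for odd $m$ the numerator $\alpha^{m+1} - \beta^{m+1}$ vanishes.

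For the ``only if'' direction, assume $a_f(p^m) = 0$ with $p \nmid 6N$. One first rules out $\alpha = \beta$, since that would force $(m+1)\alpha^m = 0$ while $\alpha^2 = p^{k-1} \neq 0$. So $\zeta := \alpha/\beta$ is a primitive $d$-th root of unity with $d > 1$ and $d \mid m+1$. The identity
$$
\zeta + \zeta^{-1} ~=~ \frac{\alpha^2 + \beta^2}{\alpha \beta} ~=~ \frac{a_f(p)^2}{p^{k-1}} - 2
$$
exhibits $\zeta + \zeta^{-1}$ as both an algebraic integer (a sum of roots of unity) and a rational number, hence a rational integer equal to $2\cos(2\pi/d)$. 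The only possibilities are $d \in \{2, 3, 4, 6\}$.

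It then remains to eliminate $d \in \{3, 4, 6\}$ via $p$-adic valuations. The case $d = 4$ yields $a_f(p)^2 = 2p^{k-1}$, and comparing $2$-adic valuations forces $p = 2$; the case $d = 6$ yields $a_f(p)^2 = 3p^{k-1}$, and comparing $3$-adic valuations forces $p = 3$; both contradict $p \nmid 6N$. The case $d = 3$ yields $a_f(p)^2 = p^{k-1}$, which requires $k-1$ to be even and is excluded by the even weight in the $\Gamma_0(N)$ trivial-character setting of the paper. Only $d = 2$ survives, giving $\zeta = -1$, $a_f(p) = \alpha + \beta = 0$, and $m$ odd.

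I expect the only genuinely substantive step to be the reduction $\zeta + \zeta^{-1} \in \Z$, which is the crux of the argument; once in hand, the remaining case analysis is mechanical, with mild care needed in the $d = 3$ case to invoke the parity of the weight.
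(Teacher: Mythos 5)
The paper proves nothing here: it cites Murty--Murty--Shorey [Lemma~1, MMSh] and moves on, so there is no in-text argument to compare against. Your proof is correct and is essentially the standard argument behind that cited lemma: write $a_f(p^m) = (\alpha^{m+1}-\beta^{m+1})/(\alpha-\beta)$, observe $\zeta = \alpha/\beta$ is a root of unity whose trace $\zeta+\zeta^{-1} = a_f(p)^2/p^{k-1}-2$ is a rational integer, conclude $\zeta$ has order in $\{2,3,4,6\}$, and then kill $d=4,6$ by $2$- and $3$-adic valuations (using $p\nmid 6$) and $d=3$ by the parity of $k-1$ (using that trivial nebentypus forces $k$ even). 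The one point worth being explicit about, which you flag but could state more fully, is precisely that last fact about even weight, since without it the $d=3$ case $a_f(p)^2 = p^{k-1}$ is not a priori impossible; with $k$ even it contradicts $\nu_p(a_f(p)^2)$ being even while $k-1$ is odd.
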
	

From the work of Lagarias and Odlyzko (\cite{LO}, Theorem 1.1) 
and applying \lemref{Zx}, 
we can deduce the following theorem (see \cite{MM84}, Lemma 5.3).
\begin{thm}\label{GRHpifmd}
Suppose that GRH is true and $f$ be as in \thmref{CTZ}. 
Then for $x \geq 2$, we have
\begin{equation}\label{GRHpifm}
\pi_{f,m}^{*}(x,d) 
~=~ \d_m(d) \pi(x) 
~+~ O\( \d_m(d) d^4 x^{1/2} \log(dNx)  \) 
~+~ O\(x^{3/4}\).
\end{equation}
The term $O(x^{3/4})$ is absent in \eqref{GRHpifm} if $m$ is even.
\end{thm}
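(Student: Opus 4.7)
The plan is to combine the effective Chebotarev density theorem of Lagarias-Odlyzko, valid under GRH, with the conditional bound $Z(x) \ll x^{3/4}$ supplied by \lemref{Zx}. First I would separate $\pi_{f,m}^*(x,d)$ from the unrestricted counting function $\pi_{f,m}(x,d)$: by \lemref{apn0}, for primes $p \nmid 6N$ one has $a_f(p^m) = 0$ if and only if $m$ is odd and $a_f(p) = 0$. Consequently,
\begin{equation*}
\pi_{f,m}^{*}(x,d) ~=~ \pi_{f,m}(x,d) ~-~ \varepsilon_m Z(x) ~+~ O(1),
\end{equation*}
where $\varepsilon_m = 1$ for $m$ odd and $\varepsilon_m = 0$ for $m$ even, the $O(1)$ absorbing the primes dividing $6N$. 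This dichotomy is what will make the $O(x^{3/4})$ term disappear in the even case.

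Next I would apply effective Chebotarev to the extension $K_{d,m}/\Q$. The condition $a_f(p^m) \equiv 0 \pmod d$ for $p \nmid dN$ translates to $\tilde{\rho}_{d,m}(\s_p) \in C_{d,m}$, a conjugation-invariant subset of $\G_{d,m}$, so under GRH Theorem 1.1 of Lagarias-Odlyzko yields
\begin{equation*}
\pi_{f,m}(x,d) ~=~ \d_m(d) \pi(x) ~+~ O\(\d_m(d) x^{1/2} \log\(|d_{K_{d,m}}| \cdot x^{[K_{d,m}:\Q]}\)\).
\end{equation*}

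The main technical point, and the likely chief obstacle, is to bound the degree and discriminant of $K_{d,m}$ uniformly in $d$. Since $\tilde{\rho}_{d,m}$ factors as $\tilde{\rho}_d$ followed by $Sym^m$, the image $\G_{d,m}$ is a quotient of $\tilde{\rho}_d(\G) \subseteq \GL_2(\Z/d\Z)$; hence $[K_{d,m}:\Q] = |\G_{d,m}| \le |\GL_2(\Z/d\Z)| \le d^4$, a bound that crucially does not depend on $m$. Viewing $\tilde{\rho}_{d,m}$ merely as landing in $\GL_{m+1}(\Z/d\Z)$ would give the far weaker estimate $d^{(m+1)^2}$, so it is essential to factor through the two-dimensional representation. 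Since $K_{d,m}/\Q$ is unramified outside primes dividing $dN$, the standard conductor-discriminant inequality yields $\log|d_{K_{d,m}}| \ll d^4 \log(dN)$. Inserting these into the Lagarias-Odlyzko error gives $O\(\d_m(d) d^4 x^{1/2} \log(dNx)\)$, and combining with the bound $Z(x) \ll x^{3/4}$ from \lemref{Zx} produces the stated formula, with the $O(x^{3/4})$ term appearing precisely when $m$ is odd.
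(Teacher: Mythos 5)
Your proposal matches the paper's approach: the paper gives no detailed proof of \thmref{GRHpifmd} but points to Lagarias--Odlyzko's Theorem 1.1 together with \lemref{Zx}, citing \cite[Lemma 5.3]{MM84}, and your argument reproduces exactly that chain of reasoning. In particular, you correctly identify the crucial point that $[K_{d,m}:\Q]$ is bounded by $|\GL_2(\Z/d\Z)| \le d^4$ independently of $m$ (since $\tilde{\rho}_{d,m}$ factors through $\tilde{\rho}_d$), which is what keeps the Chebotarev error term free of $m$, and you correctly trace the $O(x^{3/4})$ term to the odd-$m$ case via \lemref{apn0}.
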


We now state a lemma \cite[Prop. 2.4]{MM07} which
provides a lower bound for the Fourier coefficients at prime powers.
\begin{lem}\label{Lafpm}
Let $f$ be a normalized cuspidal Hecke eigenform of weight $k$ and level $N$. 
There exists a constant $c_1>0$ such that for any integer $m \geq 2$ 
and any prime $p \nmid N$ with $a_f(p^m) \neq 0$, we have
$$
|a_f(p^m)| 
~\geq~ 
\gamma_f(p,m)~ p^{\frac{k-1}{2}(m-c_1 \log m)},
$$
where 
\begin{equation*}
\gamma_f(p,m) ~=~
\begin{cases}
1 & \tx{if $m$ is even} \\
|a_f(p)| & \tx{if $m$ is odd}~~.
\end{cases}
\end{equation*}
\end{lem}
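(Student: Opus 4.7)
The plan is to parametrize the Frobenius eigenvalues at $p$ via Deligne's bound, rewrite $|a_f(p^m)|$ in terms of a linear form in logarithms, and apply Baker's theorem.

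Write the Hecke polynomial at $p$ as $X^2 - a_f(p)X + p^{k-1} = (X-\alpha)(X-\beta)$. By Deligne, $|\alpha| = |\beta| = p^{(k-1)/2}$, so one may set $\alpha = p^{(k-1)/2} e^{i\theta}$, $\beta = p^{(k-1)/2} e^{-i\theta}$ for some $\theta \in [0,\pi]$. The identity
\[
a_f(p^m) \;=\; \frac{\alpha^{m+1} - \beta^{m+1}}{\alpha - \beta} \;=\; p^{m(k-1)/2} \cdot \frac{\sin((m+1)\theta)}{\sin\theta}
\]
already dispatches the case $a_f(p) = 0$: then $\theta = \pi/2$, \lemref{apn0} forces $m$ to be even, and $|a_f(p^m)| = p^{m(k-1)/2}$, much stronger than the claim. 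Henceforth I assume $a_f(p) \neq 0$ and set $\gamma := \beta/\alpha = e^{-2i\theta}$, so that $|\gamma - 1| \leq 2$ and $|\gamma^{m+1} - 1| = 2|\sin((m+1)\theta)|$.

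The element $\gamma$ is algebraic of degree at most $2$; it satisfies the quadratic $p^{k-1}X^2 - (a_f(p)^2 - 2p^{k-1})X + p^{k-1} = 0$, from which one reads off the height bound $h(\gamma) \leq (k-1)\log p/2 + O(1)$. Moreover, $\gamma$ is \emph{not} a root of unity: if $\gamma^N = 1$ then $\alpha^N = \beta^N$, forcing $a_f(p^{N-1}) = 0$, which by \lemref{apn0} contradicts $a_f(p) \neq 0$ (the finitely many primes $p \mid 6$ can be absorbed into $c_1$, since for such $p$ the order of $\gamma$ is bounded in terms of $f$ alone and a direct estimate applies). By Baker's theorem on linear forms in logarithms, in the explicit form of Matveev, there exists an effective absolute constant $c_2 > 0$ such that
\[
|\gamma^{m+1} - 1| \;\geq\; \exp\bigl(-c_2\, h(\gamma)\, \log(m+1)\bigr) \;\geq\; p^{-c_3 (k-1)\log m}
\]
for a suitable $c_3 > 0$. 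Substituting into the formula for $|a_f(p^m)|$ gives, for even $m$, the desired $|a_f(p^m)| \geq p^{(k-1)(m - c_1 \log m)/2}$ after mild adjustment of constants.

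For $m$ odd, $m+1$ is even, so the factorization $(X^{m+1} - Y^{m+1})/(X - Y) = (X+Y) \cdot (X^{m+1} - Y^{m+1})/(X^2 - Y^2)$ evaluated at $(\alpha,\beta)$ shows that $a_f(p) = \alpha + \beta$ divides $a_f(p^m)$ in $\Z$. Applying the same Baker estimate to
\[
\left|\frac{a_f(p^m)}{a_f(p)}\right| \;=\; p^{(m-1)(k-1)/2} \cdot \frac{|\sin((m+1)\theta)|}{|\sin 2\theta|}
\]
(and using $|\sin 2\theta| \leq 1$) yields $|a_f(p^m)/a_f(p)| \geq p^{(k-1)(m - c_1 \log m)/2}$; multiplying by $|a_f(p)|$ restores the extra factor $\gamma_f(p,m) = |a_f(p)|$ in the odd case.

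\textbf{Main obstacle.} The whole argument rests on a Baker-type lower bound that is simultaneously \emph{linear} in $h(\gamma)$ and \emph{logarithmic} in the exponent $m+1$, so that the multiplicative deficit from the trivial size $p^{m(k-1)/2}$ is no more than $p^{c_1(k-1)\log m/2}$. Correctly tracking the polynomial-in-$p$ dependence through $h(\gamma) \ll (k-1)\log p$ is the one delicate input; the remainder—the Chebyshev-type identity and the elementary divisibility $a_f(p) \mid a_f(p^m)$ for odd $m$—is routine.
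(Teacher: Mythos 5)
The paper gives no proof of this lemma; it simply cites it as Prop.~2.4 of Murty--Murty \cite{MM07}, whose argument (going back to Murty--Murty--Shorey via Shorey's linear-form estimates) is indeed Baker-theoretic. Your proof follows the same strategy and is correct: writing $a_f(p^m)=\frac{\alpha^{m+1}-\beta^{m+1}}{\alpha-\beta}$, reducing the lower bound to one on $|\gamma^{m+1}-1|$ with $\gamma=\beta/\alpha$ on the unit circle, bounding $h(\gamma)\le\frac{k-1}{2}\log p$ from the quadratic $p^{k-1}X^2-(a_f(p)^2-2p^{k-1})X+p^{k-1}$, and invoking Matveev to get $|\gamma^{m+1}-1|\ge\exp(-c\,h(\gamma)\log m)$; the loss relative to $p^{m(k-1)/2}$ is then $p^{O((k-1)\log m)}$, which is exactly the $c_1\log m$ deficit in the exponent. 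The odd-$m$ case via $a_f(p)\mid a_f(p^m)$ and the identity $\frac{\alpha^{m+1}-\beta^{m+1}}{\alpha-\beta}=(\alpha+\beta)\frac{\alpha^{m+1}-\beta^{m+1}}{\alpha^2-\beta^2}$ correctly produces the extra factor $|a_f(p)|$. A few small points worth tightening in a final write-up: (i) when $a_f(p)=0$ one can compute $|a_f(p^m)|=p^{m(k-1)/2}$ directly for even $m$ from $\alpha=\pm ip^{(k-1)/2}$, without invoking \lemref{apn0} (which anyway requires $p\nmid 6N$); (ii) the fact that $\gamma$ is not a root of unity when $a_f(p)\neq 0$ is cleanest to observe via the universal bound that a degree-$\le 2$ root of unity has order $\le 6$, so the ``bounded order'' claim for $p\mid 6$ is automatic; (iii) in applying Matveev one should interpret $|\gamma^{m+1}-1|$ as controlled by the linear form $(m+1)\log\gamma-2k\pi i$ (a form in $\log\gamma$ and $\log(-1)$), which is the step that makes the bound genuinely linear in $h(\gamma)$ and logarithmic in $m$.
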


\subsection{The Sato-Tate conjecture}\label{ST}
The Sato-Tate conjecture is now a theorem due to the 
work of Barnet-Lamb, Clozel, Geraghty, 
Harris, Shepherd-Barron and Taylor 
(\cite[Theorem~B]{BGHT}, \cite{{CHT}, {HST}}). 
It states that for a non-CM normalized cuspidal Hecke eigenform $f$, 
the numbers 
$$
\lambda_f(p) ~=~ \frac{ a_f(p)}{2p^{(k-1)/2}}
$$ 
are equidistributed in the interval ${[-1, 1]}$ with respect 
to the Sato-Tate measure 
$$
d\mu_{ST} = (2/\pi)\sqrt{1- t^2}~dt.
$$ 
This means that for any $-1 \leq a \leq b \leq 1$, 
the density of the set of primes~$p$ 
satisfying ${\lambda_f(p) \in [a, b]}$ is 
\begin{equation*}
	\frac{2}{\pi}\int_a^b\sqrt{1-t^2}~ dt.
\end{equation*}
We have the following effective version of 
the Sato-Tate conjecture due to Thorner \cite{Th}.
\begin{thm}\label{EffST}
Let $f$ be a non-CM normalized cuspidal Hecke eigenform of weight $k$ and level $N$. 
Let $I \subseteq [-1, 1]$ be an interval. Then for $x \geq 3$, we have
$$
\#\{p \leq x ~:~ p \nmid N,~ \lambda_f(p) \in I\} 
~=~
 \mu_{ST}(I) \pi(x) 
 ~+~ O\(\pi(x) \frac{\log(kN \log x)}{\sqrt{\log x}}\).
$$
\end{thm}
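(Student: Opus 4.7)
The plan is to reduce counting primes with $\lambda_f(p)\in I$ to bounding oscillatory prime sums weighted by Chebyshev polynomials of the second kind, and then to invoke the effective prime number theorem for the symmetric power $L$-functions $L(s,\mathrm{Sym}^n f)$.

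By Deligne's bound one writes $\lambda_f(p)=\cos\theta_p$ for a unique $\theta_p\in[0,\pi]$, and the Sato--Tate measure pushes forward to $(2/\pi)\sin^2\theta\,d\theta$; the Chebyshev polynomials $U_n(\cos\theta)=\sin((n+1)\theta)/\sin\theta$ then form an orthonormal basis of $L^2([0,\pi],(2/\pi)\sin^2\theta\,d\theta)$. I would first apply the Beurling--Selberg trigonometric extremal polynomials of a degree parameter $M$ to sandwich $\mathbf{1}_I$ between $F_M^- \le \mathbf{1}_I \le F_M^+$ with $\int_{-1}^{1}(F_M^+ - F_M^-)\,d\mu_{ST} \ll 1/M$. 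Expanding $F_M^\pm$ in the basis $\{U_n\}_{0\le n\le M}$, the $n=0$ coefficient yields the main contribution $\mu_{ST}(I)\pi(x)$ up to an error $O(\pi(x)/M)$, and the problem reduces to estimating, uniformly in $1\le n\le M$, the prime sums
$$
S_n(x) \;:=\; \sum_{\substack{p\le x\\ p\nmid N}} U_n(\lambda_f(p)).
$$

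The arithmetic heart of the matter is the Hecke identity $U_n(\lambda_f(p)) = a_{\mathrm{Sym}^n f}(p)/p^{n(k-1)/2}$, where $a_{\mathrm{Sym}^n f}(p)$ is the $p$-th Dirichlet coefficient of the cuspidal automorphic $L$-function attached to $\mathrm{Sym}^n f$; the automorphy and cuspidality of $\mathrm{Sym}^n f$ for every $n\ge 1$ is a theorem of Newton--Thorne. Granting this, one applies an unconditional effective prime number theorem for each $L(s,\mathrm{Sym}^n f)$ with polynomial control of the analytic conductor (which is polynomial in $n$, $k$, $N$), yielding an estimate of the shape $S_n(x) \ll \pi(x)\exp\!\bigl(-c\sqrt{\log x}/g(n,k,N)\bigr)$ for some mild function $g$. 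Choosing $M$ so as to balance $\pi(x)/M$ against $\sum_{n\le M}|S_n(x)|$, essentially $M \asymp \sqrt{\log x}/\log(kN\log x)$, should then produce the claimed error $O\!\bigl(\pi(x)\log(kN\log x)/\sqrt{\log x}\bigr)$.

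The principal obstacle is securing an effective zero-free region, or better, a log-free zero density estimate, for $L(s,\mathrm{Sym}^n f)$ with only polynomial dependence on the parameters $n$, $k$, $N$. A naive Hadamard--de la Vall\'ee Poussin region loses too much in $n$ and forces a substantially weaker error term; instead one must exploit log-free zero density estimates for the family $\{\mathrm{Sym}^n f\}_{n\le M}$ in the style of Kowalski--Michel, together with the Rankin--Selberg bounds supplied by the Newton--Thorne cuspidality. Stitching these ingredients together uniformly in $n$, and tracking the dependence through the Beurling--Selberg expansion, is the delicate step that ultimately delivers the precise error term stated in the theorem.
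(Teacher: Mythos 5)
This theorem is not proved in the paper: it is quoted as a black box from Thorner, \emph{Effective forms of the Sato--Tate conjecture}, Res.\ Math.\ Sci.\ \textbf{8} (2021), Paper No.\ 4 (reference [Th]), so there is no in-paper argument to compare against. Your reconstruction --- sandwiching $\mathbf{1}_I$ by degree-$M$ extremal trigonometric polynomials, expanding in the Chebyshev basis $\{U_n\}$ which is orthonormal for $d\mu_{ST}$, identifying $U_n(\lambda_f(p))$ with the normalized $p$-th coefficient of $L(s,\mathrm{Sym}^n f)$, invoking Newton--Thorne automorphy together with a Kowalski--Michel--type log-free zero density estimate for the family $\{\mathrm{Sym}^n f\}_{n\le M}$, and then balancing $M$ against the resulting savings --- is indeed the strategy of Thorner's cited paper, and your closing remark correctly pinpoints where the real work lies: a naive Hadamard--de la Vall\'ee Poussin region loses a factor that grows with $n$, so the uniformity in $n,k,N$ must come from the zero density estimate, which is precisely what forces $M\asymp\sqrt{\log x}/\log(kN\log x)$ and hence the stated error term. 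In short, your proposal is a correct sketch of the proof in the reference the paper relies on.
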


\medskip

\section{Trace of symmetric $n$-th power of matrices}
Let $R$ be a non-zero commutative ring with identity. 
Let $M$ be a free $R$-module of rank $2$. 
For any integer $n \geq 1$, the tensor product 
$$
\cT^n(M)= \underbrace{M \t_{R} M\t_{R} \cdots \t_{R} M}_{n ~\tx{times}}
$$ 
is a free $R$-module of rank $2^n$. More precisely, if 
$\cB= \{e_1, e_2\}$ is an $R$-basis for $M$, then 
$$
\left\{v_1 \t v_2 \t \cdots \t v_n ~:~ v_i \in \cB , 1\leq i \leq n\right\}
$$
is an $R$- basis for $\cT^n(M)$.
Let $S_n$ denote the symmetric group on $n$ symbols. 
There is a natural $R$-linear left action of 
$S_n$ on $\cT^n(M)$ satisfying
$$
\s(v_1\t v_2 \t \cdots \t v_n) = 
v_{\s^{-1}(1)} \t v_{\s^{-1}(2)} \t \cdots \t v_{\s^{-1}(n)}
$$
for $\s \in S_n$  (see \cite[\S 10.4, \S 11.5, page 451]{DF}, \cite[\S 1.5, \S 1.6]{Se77}).

An element $w \in \cT^n(M)$ is called a symmetric $n$-tensor 
if $\s(w) = w,~ \forall~ \s \in S_n$. 
The set of all symmetric $n$- tensors
$$
\cS^n(M) = \{ w \in \cT^n(M)~:~ \s(w) = w ,~ \forall~\s \in S_n\}
$$
is a free $R$-module of rank $n+1$.

Let $\cX= \{1, 2\}$ and for each $\vec{j}= (j_1, j_2, \cdots ,j_n) \in \cX^n$, 
let $S_n(\vec{j})$ denote the orbit of $\vec{j}$ 
under the natural left action of $S_n$ on $\cX^n$. 
For each $1 \leq i \leq n+1$, set
$$
\vec{r}_{i} = 
(\underbrace{1, 1, \cdots, 1}_{n-i+1 ~ \tx{times}}, 
\underbrace{2,2,\cdots,2}_{i-1 ~ \tx{times}}) \in \cX^{n}
\phantom{mm}\tx{and}\phantom{mm} 
u_i = \sum_{\vec{j} \in S_n(\vec{r}_i)} e_{\vec{j}}~,
$$
where $e_{\vec{j}}$ is a shorthand notation for 
$e_{j_1} \t e_{j_2} \t \cdots \t e_{j_n}$. 
Then $\cB_n = \{u_i ~:~ 1 \leq i \leq n+1\}$
is an $R$-basis of $\cS^n(M)$.

Let $\tx{Aut}_R(M)$ be the group of all 
$R$-module automorphisms of $M$. 
Each $E \in \tx{Aut}_R(M)$ induces an automorphism 
$$
\tilde{E} : \cT^n(M) \longrightarrow \cT^n(M)
$$
such that 
$\tilde{E}(v_1 \t v_2 \t \cdots \t v_n) 
~=~ E(v_1)\t E(v_2) \t \cdots \t E(v_n)$. 
The restriction of $\tilde{E}$ to $\cS^n(M)$ denoted by  
$$
E_{\cS^n} : \cS^n(M) \longrightarrow \cS^n(M)
$$
is an automorphism of $\cS^n(M)$. Hence we have a group homomorphism 
\begin{align*}
	\phi : \tx{Aut}_R(M) & \longrightarrow \tx{Aut}_R(\cS^n(M)) \\
	E &\longmapsto E_{\cS^n}
\end{align*}
Let $[E]_{\cB}, [E_{\cS^n}]_{\cB_n}$ denote the matrices of 
$E, E_{\cS^n}$ with respect to the bases $\cB$ and $\cB_n$ respectively.
The map $\phi$ induces a group homomorphism 
from $\GL_2(R)$ to $\GL_{n+1}(R)$ given by
\begin{align*}
	\phi_{\cS^n} : \GL_2(R) &\longrightarrow \GL_{n+1}(R)\\	
	[E]_{\cB} &\longmapsto [E_{\cS^n}]_{\cB_n}
\end{align*}
Now onwards, we use the notation 
$Sym^n$ for the map $\phi_{\cS^n}$. 

\medskip

For any $\cA \in \GL_2(R)$, 
the following lemma provides an explicit expression for the entries of $Sym^n(\cA)$ 
in terms of the entries of $\cA$. 
\begin{lem}\label{SymA}
Let
\begin{equation*}
\cA ~=~
\begin{pmatrix}
a_{11} & a_{12} \\
a_{21} & a_{22}
\end{pmatrix}
\in \GL_2(R).
\end{equation*}
Then we have $Sym^n(\cA) = (c_{ij})$ with
\begin{equation*}
c_{ij} ~=~ 
\sum_{\vec{s} \in S_n(\vec{r}_j)}   a_{\vec{r}_i \vec{s}}~,
\phantom{m} i,j = 1,2,\cdots, (n+1),
\end{equation*}
where 
$a_{\vec{t} \vec{s}} 
= a_{t_1 s_1} a_{t_2 s_2} \cdots a_{t_n s_n}$ 
for $\vec{t}, \vec{s} \in \cX^n$.
\end{lem}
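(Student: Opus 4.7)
The plan is to compute $E_{\cS^n}(u_j)$ directly in the basis $\cB_n$ and read off the matrix entries. By definition, $u_j = \sum_{\vec{s}\in S_n(\vec{r}_j)} e_{\vec{s}}$, so the linearity and multiplicative nature of $\tilde{E}$ give
$$
\tilde{E}(u_j) ~=~ \sum_{\vec{s}\in S_n(\vec{r}_j)} E(e_{s_1})\t\cdots\t E(e_{s_n}).
$$
Expanding each $E(e_{s_k})=\sum_{t_k=1}^{2} a_{t_k s_k}\, e_{t_k}$ and distributing the tensor product produces
$$
\tilde{E}(u_j) ~=~ \sum_{\vec{s}\in S_n(\vec{r}_j)} \sum_{\vec{t}\in\cX^n} a_{\vec{t}\vec{s}}\, e_{\vec{t}}.
$$

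Next I would argue that $\tilde{E}(u_j)$ is $S_n$-invariant, hence lies in $\cS^n(M)$ and can be written uniquely as $\sum_{i=1}^{n+1} c_{ij} u_i$. The invariance is immediate from the fact that the $S_n$-action (which permutes tensor factors) commutes with $\tilde{E}$ (which acts diagonally across all tensor factors): for $\s\in S_n$ we have $\s\cdot\tilde{E}(u_j)=\tilde{E}(\s\cdot u_j)=\tilde{E}(u_j)$. Consequently, in the expansion above, the coefficient of $e_{\vec{t}}$ depends only on the $S_n$-orbit of $\vec{t}$, and the coefficient attached to the basis vector $u_i$ is exactly the common coefficient of any $e_{\vec{t}}$ with $\vec{t}\in S_n(\vec{r}_i)$.

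Choosing the canonical orbit representative $\vec{t}=\vec{r}_i$, I read off
$$
c_{ij} ~=~ \sum_{\vec{s}\in S_n(\vec{r}_j)} a_{\vec{r}_i\vec{s}},
$$
which is the claimed formula. Since by construction $[E_{\cS^n}]_{\cB_n}=(c_{ij})$ and $Sym^n(\cA)=[E_{\cS^n}]_{\cB_n}$ when $[E]_{\cB}=\cA$, the lemma follows.

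The only step requiring any care is the passage from the double sum $\sum_{\vec{s}}\sum_{\vec{t}} a_{\vec{t}\vec{s}} e_{\vec{t}}$ to the basis $\{u_i\}$: one must justify that the coefficient of $e_{\vec{t}}$ depends only on the orbit of $\vec{t}$, which is where $S_n$-invariance is used. Once this is in place, the rest is just bookkeeping, and no non-trivial combinatorial identity is needed. I expect this invariance argument to be the main (and essentially the only) conceptual point of the proof.
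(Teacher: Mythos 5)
Your proof is correct and follows essentially the same route as the paper: both compute $E_{\cS^n}(u_j)$ by expanding in the tensor basis $\{e_{\vec t}\}$ and then regroup the coefficients according to $S_n$-orbits to read off the matrix entries in the basis $\cB_n$. The one small difference is how the regrouping is justified: you invoke the $S_n$-equivariance of $\tilde E$ to conclude that the coefficient of $e_{\vec t}$ depends only on the orbit of $\vec t$, whereas the paper verifies the same fact by a direct reindexing computation, observing that commutativity of $R$ gives $\sum_{\vec s\in S_n(\vec r_j)} a_{\vec w\vec s}=\sum_{\vec s\in S_n(\vec r_j)} a_{\vec t\vec s}$ for all $\vec w\in S_n(\vec t)$; these are two ways of saying the same thing, and either is perfectly adequate.
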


\begin{proof}
Let $E$ be the automorphism of $M$ whose matrix 
with respect to the basis $\cB$ is $\cA$. Then we have
$$
E(e_i) ~=~ a_{1i}e_1 + a_{2i} e_2 ~,~ i=1,2.
$$
Hence we get
\begin{equation*}
\begin{split}
E_{\cS^n}(u_j) 
&~=~ \sum_{\vec{s} \in S_n(\vec{r}_j)}  E(e_{\vec{s}})  \\
&~=~ \sum_{\vec{s} \in S_n(\vec{r}_j)} 
(a_{1 s_1} e_1 + a_{2 s_1} e_2) 
\t (a_{1 s_2} e_1 + a_{2 s_2} e_2) 
\t \cdots \t (a_{1 s_n} e_1 + a_{2 s_n} e_2)  \\
&~=~ \sum_{\vec{t} \in \cX^n} 
\sum_{\vec{s} \in S_n(\vec{r}_j)}  a_{\vec{t} \vec{s}} ~ e_{\vec{t}}.
\end{split}
\end{equation*}
Since $R$ is commutative, we note that
\begin{equation*}
\sum_{ \vec{s} \in S_n(\vec{r}_j) }  a_{\vec{w} \vec{s}}
~=~ \sum_{ \vec{s} \in S_n(\vec{r}_j) }  a_{\vec{t} \vec{s}}, 
~ \forall~ \vec{w} \in S_n(\vec{t}).
\end{equation*}
Thus we deduce that
\begin{equation*}
E_{\cS^n}(u_j)  
~=~ \sum_{i=1}^{n+1} 
\(\sum_{\vec{s} \in S_n(\vec{r}_j)} a_{\vec{r}_i \vec{s}} \) u_i.
\end{equation*}
Hence we have $Sym^n(\cA)= (c_{ij})$, where
$$
c_{ij} ~=~ 
\sum_{\vec{s} \in S_n(\vec{r}_j)}a_{\vec{r}_i \vec{s}}.
$$
\end{proof}

\begin{lem}\label{KerSym}
The kernel of $Sym^n$ is given by
\begin{equation*}
\tx{Ker}(Sym^n) ~=~
 \left\{ \lambda I_2 : \lambda \in R,~ \lambda^n = 1 \right\},
\end{equation*}
where $I_2 \in M_2(R)$ denotes the identity matrix.
\end{lem}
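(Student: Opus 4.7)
The plan is to verify the two containments separately. For the easy inclusion $\supseteq$, I would observe that if $\cA = \lambda I_2$ with $\lambda^n = 1$, then the induced automorphism $E$ acts on $M$ as multiplication by $\lambda$, hence on every pure tensor $e_{\vec{s}} \in \cT^n(M)$ as multiplication by $\lambda^n = 1$; restricting to the submodule $\cS^n(M)$ gives $E_{\cS^n} = \tx{id}$, i.e.\ $Sym^n(\cA) = I_{n+1}$.

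For the reverse inclusion $\subseteq$, I would assume $Sym^n(\cA) = (c_{ij}) = I_{n+1}$ and extract information from \lemref{SymA}. The key strategic point is to restrict attention to entries whose defining sum consists of a \emph{single term}, thereby sidestepping multinomial coefficients that might fail to be cancellable in the general commutative ring $R$; this is what I anticipate as the main pitfall of a naive approach, since in the applications $R$ will be $\Z/d\Z$, where $n$ may well be a zero divisor. Since $S_n$ fixes both $\vec{r}_1$ and $\vec{r}_{n+1}$, the orbits $S_n(\vec{r}_1)$ and $S_n(\vec{r}_{n+1})$ are singletons, and \lemref{SymA} immediately gives
$$
c_{11} = a_{11}^n, \qquad c_{n+1,n+1} = a_{22}^n, \qquad c_{21} = a_{11}^{n-1}\, a_{21}, \qquad c_{n,n+1} = a_{12}\, a_{22}^{n-1}.
$$
Setting each equal to the corresponding entry of $I_{n+1}$ will yield $a_{11}^n = a_{22}^n = 1$, which makes $a_{11}, a_{22}$ units of $R$; the last two identities then force $a_{21} = a_{12} = 0$, so $\cA$ must be diagonal.

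It remains to handle $\cA = \tx{diag}(a_{11}, a_{22})$. A direct tensor computation (or one more invocation of \lemref{SymA}) shows $E_{\cS^n}(u_j) = a_{11}^{n-j+1}\, a_{22}^{j-1}\, u_j$ for $j = 1, \ldots, n+1$, so $Sym^n(\cA) = I_{n+1}$ becomes the system $a_{11}^{n-j+1}\, a_{22}^{j-1} = 1$ for all such $j$. Comparing $j=1$ and $j=2$ yields $a_{11}^{n-1}(a_{11} - a_{22}) = 0$, and since $a_{11}^{n-1}$ is a unit I can conclude $a_{11} = a_{22}$. Setting $\lambda := a_{11}$, I obtain $\cA = \lambda I_2$ with $\lambda^n = 1$, completing the kernel description.
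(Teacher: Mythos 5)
Your proof is correct and follows essentially the same route as the paper: both extract the four corner-ish entries $c_{11}, c_{21}, c_{n,n+1}, c_{n+1,n+1}$ from Lemma~\ref{SymA} to force $a_{11}^n = a_{22}^n = 1$ and $a_{12} = a_{21} = 0$, and both then use the relation $a_{11}^{n-1}a_{22} = 1$ together with $a_{11}^n = 1$ to conclude $a_{11} = a_{22}$. The only cosmetic difference is that you derive $a_{11}^{n-1}a_{22} = 1$ from the full diagonal action on the $u_j$ rather than reading off $c_{22}$ directly; your explicit remark about avoiding multinomial coefficients (relevant over $\Z/d\Z$) is a nice articulation of why the single-term entries are the right ones to use.
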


\begin{proof}
Suppose $\cA = (a_{ij}) \in \tx{Ker}(Sym^n)$,  then we have 
$Sym^n(\cA) = (c_{ij}) = I_{n+1}$, identity matrix of order $n+1$. 
From \lemref{SymA}, we get
\begin{equation*}
c_{11} ~=~ a_{11}^n = 1 ~,~~ 
c_{21} ~=~ a_{11}^{n-1} a_{21} = 0 ~,~~ 
c_{n (n+1)} ~=~ a_{22}^{n-1}a_{12} = 0 ~~\tx{and}~~
c_{(n+1)(n+1)} ~=~ a_{22}^n = 1.
\end{equation*}
This implies that $a_{21} = a_{12} = 0$. 
Again applying \lemref{SymA}, we obtain
$c_{22} = a_{11}^{n-1}a_{22} = 1$. Thus we get 
$a_{11} = a_{22}$, since $a_{11}^n = 1$. 
Hence we deduce that $\cA = a_{11} I_2$ with $a_{11}^n = 1$. 
Also we see that for any $\lambda \in R$ with $\lambda^n = 1$,
the matrix $\lambda I_2$ lies in the kernel of $Sym^n$. 
This completes the proof of the Lemma.
\end{proof}

\medskip

The following lemma describes the trace of $Sym^n(\cA)$ 
in terms of the trace and the determinant of $\cA$ (see \cite[page 77]{FH}).

\begin{lem}\label{trSymA}
For any integer $n \geq 2$ and $\cA = (a_{ij}) \in \GL_2(R)$, we have
\begin{equation*}
\tx{tr}(Sym^n(\cA)) 
~=~ \tx{tr}(A)^{\ve_{n+1}} F_{n+1}\(\tx{tr}(\cA)^2, \tx{det}(\cA)\).
\end{equation*}
\end{lem}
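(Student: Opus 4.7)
The plan is to reduce the identity to a universal polynomial statement valid over $\Z[a_{ij}]$, verify it on the Zariski-dense set of diagonalizable complex matrices via the standard eigenvalue description of $Sym^n$, and then extract the claimed factorization from a Chebyshev-like recurrence by induction on the parity of $n$. To carry out the reduction, I would first observe that by \lemref{SymA}, each entry of $Sym^n(\cA)$ is a polynomial in the entries $a_{ij}$ with integer coefficients, so the same is true for $\tr(Sym^n(\cA))$. The right-hand side is also a polynomial in the $a_{ij}$, once $F_{n+1}$ is produced as a polynomial over $\Z$ (which my induction will do). Hence the asserted identity is a polynomial identity in $\Z[a_{11},a_{12},a_{21},a_{22}]$, and it suffices to check it for $\cA \in M_2(\C)$, or equivalently on the Zariski-dense open subset of diagonalizable matrices.

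For such a diagonalizable $\cA$ with eigenvalues $\alpha, \beta \in \C$, the matrix $Sym^n(\cA)$ is conjugate to the diagonal matrix with entries $\alpha^{n-i}\beta^i$ for $0 \leq i \leq n$, whence
$$
\chi_n ~:=~ \tr(Sym^n(\cA)) ~=~ \sum_{i=0}^{n} \alpha^{n-i}\beta^i.
$$
A short direct manipulation of this sum yields the Chebyshev-like recurrence
$$
\chi_n ~=~ t\,\chi_{n-1} ~-~ d\,\chi_{n-2}, \qquad \chi_0 = 1,\ \chi_1 = t,
$$
where $t = \tr(\cA) = \alpha + \beta$ and $d = \det(\cA) = \alpha\beta$.

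I would then prove by induction on $n$ that $\chi_n \in \Z[t^2, d]$ when $n$ is even, and $\chi_n \in t \cdot \Z[t^2, d]$ when $n$ is odd. The base cases $n = 0, 1$ are immediate. For the inductive step, $\chi_{n-1}$ and $\chi_{n-2}$ have opposite parities, so the combination $t\chi_{n-1} - d\chi_{n-2}$ automatically lands in the correct subset: when $n$ is even, $t\chi_{n-1}$ contributes $t^2$ times a polynomial in $t^2, d$ and $d\chi_{n-2}$ contributes a polynomial in $t^2, d$; when $n$ is odd, $t$ factors out of both terms. This yields precisely the factorization $\chi_n = \tr(\cA)^{\ve_{n+1}} F_{n+1}(\tr(\cA)^2, \det(\cA))$ with $\ve_{n+1} \in \{0,1\}$ and $F_{n+1} \in \Z[X, Y]$ determined by the parity of $n$. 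The only potentially delicate step is the reduction from general commutative rings $R$ to diagonalizable complex matrices, but this is handled cleanly by the universal polynomial identity observation; the remainder of the argument is purely elementary bookkeeping.
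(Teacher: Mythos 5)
Your reduction to a universal polynomial identity over $\Z$ and the passage to complex matrices mirror the paper's first steps (the paper upper-triangularizes rather than diagonalizes, which is equivalent and slightly cleaner since it avoids Zariski-density considerations). Where you diverge is in the second half: the paper simply plugs the computed trace $\sum_{i=0}^n \alpha^{n-i}\beta^i$ into the pre-established identity \eqref{PhPs}, which expresses $\frac{X^{n+1}-Y^{n+1}}{X-Y}$ in terms of the explicit cyclotomic-root definition of $F_{n+1}$ from \S4, whereas you derive the Chebyshev recurrence $\chi_n = t\chi_{n-1} - d\chi_{n-2}$ and build a polynomial inductively on parity. The recurrence route is a legitimate and more elementary alternative, but as written it proves a slightly weaker statement: it shows the existence of \emph{some} $G_{n+1}\in\Z[X,Y]$ with $\chi_n = t^{\ve_{n+1}} G_{n+1}(t^2,d)$, not that your $G_{n+1}$ is the specific $F_{n+1}$ defined in \S4 as a product over the numbers $\zeta_{n+1}^j + \zeta_{n+1}^{-j}+2$, which is what the lemma actually asserts. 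The fix is short: since $t$ and $d$ are algebraically independent, polynomials of the form $P(t^2,d)$ have only even $t$-degree while those of the form $t\,Q(t^2,d)$ have only odd $t$-degree, so the pair $(\ve_{n+1},F_{n+1})$ in such a representation of $\chi_n$ is unique; hence $G_{n+1}=F_{n+1}$. You should add this uniqueness observation (or, equivalently, check directly that the paper's $F_{n+1}$ satisfies your recurrence) to close the gap between what you prove and what the lemma states.
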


\begin{proof}
From \lemref{SymA}, we note that there exists a 
unique homogeneous polynomial 
$$
g(X_{11},X_{12}, X_{21},X_{22}) \in \Z[X_{11},X_{12}, X_{21},X_{22}]
$$ 
such that
$$
\tx{tr}(Sym^n(\cA)) ~=~ g(a_{11},a_{12}, a_{21},a_{22}),
$$
for any $\cA \in \GL_2(R)$ and any commutative ring $R$ with identity.
In order to determine $g$, without loss of generality, 
we can assume that $R= \C$. 
Since every matrix $\cA \in M_2(\C)$ has an eigen value in $\C$, 
without loss of generality, we can assume that $a_{21} = 0$ 
(by replacing $\cA$ with its conjugate, if necessary). 
Hence if $j < i$, then we have
$$
a_{\vec{r}_i \vec{s}} ~=~ 0,~
 \forall ~ \vec{s} \in S_n(\vec{r}_j).
$$
From \lemref{SymA}, we deduce that $c_{ij} = 0$ whenever $j < i$. 
Thus $Sym^n(\cA)$ is an upper triangular matrix with diagonal entries given by
$$
c_{ii} ~=~ \sum_{\vec{s} \in S_n(\vec{r}_i)}a_{\vec{r}_i \vec{s}} 
~=~ a_{\vec{r}_i \vec{r}_i} 
~=~ a_{11}^{n-i+1} a_{22}^{i-1}~,
\phantom{mm} 
i = 1, 2, \cdots, n+1.
$$
Hence by using \eqref{PhPs}, we get
\begin{equation*}
\begin{split}
\tx{tr}(Sym^n(\cA))
~=~ \sum_{i=1}^{n+1} c_{ii} 
~=~ \sum_{i=1}^{n+1} a_{11}^{n-i+1} a_{22}^{i-1} 
&~=~ \(a_{11}+a_{22}\)^{\ve_{n+1}} F_{n+1}\((a_{11}+a_{22})^2, a_{11} a_{22}\)\\
&~=~ \tx{tr}(\cA)^{\ve_{n+1}} F_{n+1}\( \tx{tr}(\cA)^2, \tx{det}(\cA)\).
\end{split}
\end{equation*}
Thus we conclude that 
$$
g(X_{11},X_{12}, X_{21},X_{22}) 
~=~ \(X_{11}+X_{22}\)^{\ve_{n+1}} 
    F_{n+1}\((X_{11}+X_{22})^2, X_{11} X_{22}-X_{21} X_{12}\).
$$
This completes the proof.
\end{proof}

\smallskip

\section{Cyclotomic polynomials and primitive divisors of $A^n-B^n$}

\smallskip

Let $K$ be a number field with the ring of integers $\cO_K$. 
Also let $A,B \in \cO_K \setminus\{0\}$. 

\begin{defn}
	A prime ideal $\fp$ of $K$ is called a primitive divisor
	of $A^n-B^n$ if $\fp \div A^n-B^n$ but
	$\fp \nmid A^m-B^m$ for $1 \leq m < n$. 
	Similarly, for $\gamma \in K\setminus\{0\}$,
	we say that $\fp$ is a primitive divisor of $\gamma^n-1$ 
	if $\nu_{\fp}(\gamma^n-1) \geq 1$ 
	but $\nu_{\fp}(\gamma^m-1) = 0$ for $1 \leq m < n$. 
	Here $\nu_{\fp}$ denotes the $\fp$-adic valuation.
\end{defn}

\begin{rmk}
	If $\fp$ is a primitive divisor of $A^n-B^n$ {\rm (}or $\gamma^n-1${\rm)}, then
	\begin{equation*}
		\cN_K(\fp) \equiv 1 (\mod n),
	\end{equation*}
	where $\cN_K$ denotes the absolute norm on $K$.
\end{rmk}

For any integer $n \geq 1$, the $n$-th cyclotomic polynomial $\Phi_n(X,Y) \in \Z[X,Y]$  is given by
$$
\Phi_n(X,Y) ~=~ 
\prod_{\substack{j=1\\ (j,n)=1}}^{n} \(X- \z_n^j Y\),
$$
where $\z_n = e^{\frac{2\pi i}{n}}$. For any integer $n \geq 3$, 
let $F_n(X,Y), \Psi_n(X,Y) \in \Z[X,Y]$ be given by
\begin{equation*}
	\Psi_n(X,Y) 
	~=~ 
	\prod_{\substack{1\leq j < n/2 \\ (j,n)=1}}  
	\(X- (\z_n^j+\z_n^{-j}+2) Y\) 
	\phantom{m} \tx{and} \phantom{m}
	F_n(X,Y)
	~=~ 
	\prod_{1\leq j < n/2} 
	\(X- (\z_n^j+\z_n^{-j}+2) Y\)	
\end{equation*}
respectively.
Then we have
\begin{equation}\label{PhPs}
	\begin{split}
		\Phi_n(X,Y) ~=~ \Psi_n\((X+Y)^2, XY\)
		\phantom{m}\tx{and}\phantom{m}
		\frac{X^n-Y^n}{X-Y} ~=~ (X+Y)^{\ve_n} F_n\((X+Y)^2, XY\),
	\end{split}
\end{equation}
where $\ve_n = 1$ if $n$ is even and $\ve_n = 0$ otherwise. 

We have the following lemma \cite[Lemma 4.10]{BGPS} concerning $\Psi_m(X,Y)$.
\begin{lem}\label{vPsi}
	Let $m = 5$ or $m \geq 7$. Let $u, v$ be coprime integers 
	and $p$ be a prime.
	Suppose  that $p^a \mid\mid \Psi_m(u,v)$, $a \geq 1$ an integer. 
	Then either
	$p \equiv \pm 1~ (\mod m)$ or $p^a \mid m$.
\end{lem}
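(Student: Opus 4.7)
The plan is to reduce the assertion to the classical divisibility result for values of the homogeneous cyclotomic polynomial $\Phi_m(X,Y)$ at coprime arguments, using the identity $\Phi_m(X,Y) = \Psi_m((X+Y)^2, XY)$ from \eqref{PhPs}. The hypothesis $m = 5$ or $m \ge 7$ is exactly the condition $\phi(m)/2 \ge 2$ under which $\Psi_m$ has degree at least two; the lemma genuinely fails for $m \in \{3,4,6\}$ (for instance $\Psi_6(1,3) = -8$, so $2^3 \mid \Psi_6(1,3)$ while $2 \not\equiv \pm 1 \pmod 6$ and $2^3 \nmid 6$).

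In the case $p \nmid m$, I would argue directly in $\overline{\F}_p$. First, $\gcd(u,v) = 1$ forces $p \nmid v$: otherwise, modulo any prime $\fp$ of $\overline{\Z}$ above $p$, every factor $u - (\z_m^j + \z_m^{-j} + 2)v$ of $\Psi_m(u,v)$ reduces to $u$, so $p \mid u^{\phi(m)/2}$, contradicting coprimality. Choose $\fp$ above $p$ with $u \equiv (\z_m^j + \z_m^{-j} + 2)v \pmod{\fp}$ for some $j$ coprime to $m$, and set $x = \z_m^j$, $y = \z_m^{-j}$. Then $xy = 1$ and $x + y \equiv uv^{-1} - 2 \pmod \fp$, with $uv^{-1} - 2 \in \F_p$. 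Applying the Frobenius $t \mapsto t^p$ gives $x^p + y^p = x + y$ in $\overline{\F}_p$, while $x^p y^p = 1$. Hence $\{x,y\}$ and $\{x^p, y^p\}$ are both root sets of $T^2 - (x+y)T + 1$ and so coincide. Since $p \nmid m$, $x$ is a primitive $m$-th root of unity in $\overline{\F}_p$, and either $x^p = x$ (giving $m \mid p - 1$) or $x^p = y = x^{-1}$ (giving $m \mid p + 1$); in either case $p \equiv \pm 1 \pmod m$.

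In the case $p \mid m$, I would lift the problem to $\Phi_m$. Let $\alpha, \beta \in \overline{\Q}$ be the roots of $T^2 - sT + v$, where $s^2 = u$, and set $L = \Q(s, \sqrt{u - 4v})$. Then $\alpha, \beta \in \cO_L$, and by \eqref{PhPs} we have $\Psi_m(u,v) = \Phi_m(\alpha, \beta) \in \Z$. Any prime ideal of $\cO_L$ dividing both $\alpha$ and $\beta$ would divide $\alpha + \beta = s$ and $\alpha\beta = v$, hence $u = s^2$ and $v$, contradicting $\gcd(u,v) = 1$; so $\alpha, \beta$ are coprime in $\cO_L$. One then invokes the classical bound $v_\fp(\Phi_m(\alpha, \beta)) \le e_\fp \cdot v_p(m)$ for every prime $\fp$ of $\cO_L$ above $p$, with $e_\fp$ the ramification index, obtained from the recursion $\Phi_{p^e m'}(X,Y) = \Phi_{m'}(X^{p^e}, Y^{p^e})/\Phi_{m'}(X^{p^{e-1}}, Y^{p^{e-1}})$ (with $\gcd(p, m') = 1$) together with a lifting-the-exponent analysis. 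Dividing by $e_\fp$ then gives $v_p(\Psi_m(u,v)) \le v_p(m)$, i.e., $p^a \mid m$.

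I expect the main obstacle to lie in the sharp $p$-adic bound in the second case: tracking the interplay between the ramification of $p$ in $L$ and the lifting-the-exponent estimates requires care, and the failure of the lemma for $m \in \{3, 4, 6\}$ shows that the naive estimate is not correct in general; the restriction on $m$ is precisely what makes the argument go through cleanly.
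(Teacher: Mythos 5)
The paper itself does not prove this lemma; it is quoted from \cite[Lemma~4.10]{BGPS}. So I will assess your argument on its own terms.

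Your treatment of the case $p\nmid m$ is essentially correct. You first observe $p\nmid v$, then fix a prime $\fp\mid p$ in $\Z[\zeta_m]$; since $\fp$ divides the product $\prod_j\bigl(u-(\zeta_m^j+\zeta_m^{-j}+2)v\bigr)$, it divides some factor, and the Frobenius argument on the reductions $\bar x=\bar\zeta_m^j$, $\bar y=\bar\zeta_m^{-j}$ shows $\{\bar x,\bar y\}=\{\bar x^p,\bar y^p\}$, so $\bar x^{p\mp1}=1$ and hence $m\mid p\mp1$ (since $p\nmid m$ makes $\bar x$ a primitive $m$-th root). One should note $\bar x\ne\bar y$ (else $m\mid 2$), but that is automatic for $m\ge3$. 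Fine.

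The case $p\mid m$ is where your proposal has a genuine gap. You invoke a ``classical bound'' $v_\fp\bigl(\Phi_m(\alpha,\beta)\bigr)\le e_\fp\cdot v_p(m)$ for $\alpha,\beta$ coprime in $\cO_L$, but this is \emph{false} as a general statement, and in fact your own example refutes it. Take $m=6$, $(u,v)=(1,3)$: then $\alpha,\beta$ are the roots of $T^2-T+3$, $L=\Q(\sqrt{-11})$, and $\Phi_6(\alpha,\beta)=(\alpha+\beta)^2-3\alpha\beta=-8$. Since $-11\equiv5\pmod 8$, the prime $2$ is inert in $L$, so $e_\fp=1$, $v_\fp(-8)=3$, while $e_\fp\,v_2(6)=1$. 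So the bound you write down fails, even though $\alpha,\beta$ are coprime in $\cO_L$. You are correct that $m\in\{3,4,6\}$ must be excluded and that this is exactly the condition $\phi(m)/2\ge 2$, but nothing in your sketch explains \emph{how} that hypothesis enters the lifting-the-exponent analysis to repair the estimate. The recursion $\Phi_{p^em'}(X,Y)=\Phi_{m'}(X^{p^e},Y^{p^e})/\Phi_{m'}(X^{p^{e-1}},Y^{p^{e-1}})$ and the standard LTE bounds are formulated for rational integers (or need extra care for $p=2$), and the passage to $\alpha,\beta\in\cO_L$ with ramified or inert $p$ is precisely where the naive transfer breaks. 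In short: the key inequality in the $p\mid m$ case is asserted rather than proved, is false in the generality in which you state it, and no mechanism is offered for why $m=5$ or $m\ge 7$ makes it true. That step needs an actual argument (for instance a careful local analysis in $\Q(\zeta_m+\zeta_m^{-1})$, or a reduction to a primitive-divisor theorem in the style of Schinzel), not an appeal to a ``classical bound'' that your own counterexample shows to be unreliable.
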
 

\medskip
 We need the following lemmas concerning $\Psi_q(X,Y)$ for any odd prime $q$.
\begin{lem}\label{DPsi}
Let $q$ be an odd prime and $\tilde{\Psi}_q(X) = \Psi_q(X,1)$. 
Then we have
\begin{equation*}
|\tilde{\Psi}_q(0)| ~=~ 1 
\phantom{m}~\tx{and}\phantom{m} 
 |D(\tilde{\Psi}_q)| ~=~ q^{\frac{q-3}{2}}, 
\end{equation*} 
where $D(\tilde{\Psi}_q)$ denotes 
the discriminant of the polynomial $\tilde{\Psi}_q$.
\end{lem}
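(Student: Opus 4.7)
The plan is to handle the two claims together via the factorization
\begin{equation*}
\zeta_q^j + \zeta_q^{-j} + 2 ~=~ (1 + \zeta_q^j)(1 + \zeta_q^{-j})
\end{equation*}
combined with a translation identifying $\tilde{\Psi}_q$ with the minimal polynomial of $\zeta_q + \zeta_q^{-1}$. Since $q$ is prime, the condition $(j,q)=1$ is automatic for $1\le j<q/2$, so $\tilde{\Psi}_q$ is monic of degree $(q-1)/2$ with exactly the listed roots.

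For the constant term, the factorization gives
\begin{equation*}
\tilde{\Psi}_q(0) ~=~ (-1)^{(q-1)/2}\prod_{j=1}^{(q-1)/2}(1+\zeta_q^j)(1+\zeta_q^{-j}) ~=~ (-1)^{(q-1)/2}\prod_{k=1}^{q-1}(1+\zeta_q^k),
\end{equation*}
the last equality because as $j$ varies the exponents $\pm j \pmod q$ exhaust $\{1,\dots,q-1\}$. Specialising $\prod_{k=1}^{q-1}(X-\zeta_q^k) = 1 + X + \cdots + X^{q-1}$ at $X=-1$, with $q$ odd, yields $\prod_{k=1}^{q-1}(1+\zeta_q^k) = 1$, hence $|\tilde{\Psi}_q(0)| = 1$.

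For the discriminant, I would substitute $X = U+2$ to rewrite
\begin{equation*}
\tilde{\Psi}_q(U+2) ~=~ \prod_{j=1}^{(q-1)/2}\bigl(U - T_j\bigr) ~=:~ \Omega_q(U), \qquad T_j := \zeta_q^j + \zeta_q^{-j},
\end{equation*}
so $\Omega_q$ is the minimal polynomial over $\Q$ of $T := T_1$, a generator of the maximal real subfield $\Q(\zeta_q)^+$. Translation preserves discriminants, so $D(\tilde{\Psi}_q) = D(\Omega_q)$, and the standard identity $\Z[T] = \cO_{\Q(\zeta_q)^+}$ (obtained from $\Z[\zeta_q] = \cO_{\Q(\zeta_q)}$ by taking complex-conjugation invariants) makes this equal to the absolute discriminant of $\Q(\zeta_q)^+/\Q$. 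I would then apply the tower formula
\begin{equation*}
|D(\Q(\zeta_q)/\Q)| ~=~ N_{\Q(\zeta_q)^+/\Q}\bigl(d_{\Q(\zeta_q)/\Q(\zeta_q)^+}\bigr)\cdot |D(\Q(\zeta_q)^+/\Q)|^2.
\end{equation*}
The minimal polynomial $Y^2 - TY + 1$ of $\zeta_q$ over $\Q(\zeta_q)^+$ has discriminant $(\zeta_q - \zeta_q^{-1})^2 = T^2 - 4$, so the relative different is $(\zeta_q - \zeta_q^{-1})$, giving $d_{\Q(\zeta_q)/\Q(\zeta_q)^+} = (2-T)(2+T)$. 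The same factorisation trick as in part one produces
\begin{equation*}
\prod_{j=1}^{(q-1)/2}(2-T_j) ~=~ \prod_{k=1}^{q-1}(1-\zeta_q^k) ~=~ q \qquad \text{and} \qquad \prod_{j=1}^{(q-1)/2}(2+T_j) ~=~ 1,
\end{equation*}
so $N_{\Q(\zeta_q)^+/\Q}(d_{\Q(\zeta_q)/\Q(\zeta_q)^+}) = q$. Combined with the classical $|D(\Q(\zeta_q)/\Q)| = q^{q-2}$, the tower formula forces $|D(\Omega_q)| = q^{(q-3)/2}$.

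The main obstacle is the bookkeeping in the second part, in particular the identification $\Z[T] = \cO_{\Q(\zeta_q)^+}$ that is needed to equate the polynomial discriminant with the field discriminant. If one prefers a self-contained calculation avoiding this ring-of-integers input, a purely algebraic route is available: the identity $\Phi_q(X) = X^{(q-1)/2}\,\Omega_q(X+X^{-1})$ lets one differentiate and evaluate at $X = \zeta_q$, expressing $\Omega_q'(T)$ in terms of $\Phi_q'(\zeta_q) = q/(\zeta_q(\zeta_q-1))$; then $|D(\Omega_q)|^2 = |N_{\Q(\zeta_q)/\Q}(\Omega_q'(T))|$ reduces, via the standard norms $N(\zeta_q-1) = q$ and $N(\zeta_q+1) = 1$, to $q^{q-3}$, which yields the claim.
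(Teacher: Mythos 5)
Your proof is correct and follows essentially the same route as the paper: the constant term is handled via $\zeta_q^j+\zeta_q^{-j}+2=(1+\zeta_q^j)(1+\zeta_q^{-j})$ and $\prod_{k=1}^{q-1}(1+\zeta_q^k)=1$, and the discriminant via the relative-discriminant tower formula $|D_K|=|D_{K^+}|^2\cdot N_{K^+/\Q}(\mathfrak{d}_{K/K^+})$ with $|D_K|=q^{q-2}$ and $N_{K^+/\Q}(\mathfrak{d}_{K/K^+})=q$. Your version is just slightly more explicit about the needed input $\Z[\zeta_q+\zeta_q^{-1}]=\cO_{K^+}$ (which the paper also uses, implicitly, when equating $|D_{K^+}|$ with $|D(\tilde{\Psi}_q)|$), and you append an optional variant that sidesteps that ring-of-integers fact.
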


\begin{proof}
We have
\begin{equation*}
\tilde{\Psi}_q(X) 
~=~ 
\prod_{j=1}^{\frac{q-1}{2}} \(X-(\z^j+\z^{-j}+2)\),
\end{equation*}
where $\z = \z_q$. Hence we have
\begin{equation*}
|\tilde{\Psi}_q(0)|
~=~  \prod_{j=1}^{\frac{q-1}{2}}|\z^j+\z^{-j}+2| 
~=~  \prod_{j=1}^{\frac{q-1}{2}} |\z^j+1|^2
~=~ \prod_{j=1}^{\frac{q-1}{2}} (\z^j+1)(\z^{-j}+1) 
~=~ \prod_{j=1}^{q-1} (\z^j+1) 
~=~ 1 .
\end{equation*}

Let $K = \Q(\zeta)$. Then the maximal real subfield of $K$ is 
$K^+ = \Q\(\zeta + \overline{\zeta}\)$. The discriminants of $K$ and $K^+$ 
are related by the following formula :
\begin{equation}\label{discDK}
|D_K| ~=~ |D_{K^+}|^2 \cdot N_{K^+}\(\mathscr{D}_{K/K^+}\),
\end{equation}
where $D_K, D_{K^+}$ denotes the discriminants of $K$ and $K^+$ 
respectively (see \cite[Ch. 3, \S2]{Neu}). Here $\mathscr{D}_{K/K^+}$ denotes
 the relative discriminant and $N_{K^+}$ denotes the absolute norm of $K^+$ over $\Q$. We have
$|D_K| ~=~ q^{q-2}$, $ |D_{K^+}| = |D(\tilde{\Psi}_q)|$ and  
$$
N_{K^+}\(\mathscr{D}_{K/K^+}\) ~=~ |N_{K^+} \( \(\zeta-\overline{\zeta}\)^2\)|
 ~=~ \prod_{i=1}^{\frac{q-1}{2}} |\z^i-\z^{-i}|^2.
$$
Note that
\begin{equation*}
\begin{split}
\prod_{i=1}^{\frac{q-1}{2}} |\z^i-\z^{-i}|^2 
&~=~ \prod_{i=1}^{\frac{q-1}{2}} |\z^{2i}-1|^2 
~=~ \prod_{i=1}^{\frac{q-1}{2}} \(\z^{2i}-1\) \(\z^{-2i}-1\) 
~=~ \prod_{i=1}^{q-1} \(\z^{2i}-1\) 
~=~ \prod_{i=1}^{q-1} \(\z^{i}-1\) 
~=~ q.
\end{split}
\end{equation*}
From \eqref{discDK}, we get $q^{q-2} = |D(\tilde{\Psi}_q)|^2 \cdot q$. 
This implies that $|D(\tilde{\Psi}_q)| = q^{\frac{q-3}{2}}$.
\end{proof}

\begin{lem}\label{pdPsi}
Let $q \ne \ell$ be distinct primes with $q$ odd and $u,v$ be integers such that 
\begin{equation*}
\Psi_q(u,v) ~\equiv~ 0 ~(\mod \ell), 
\phantom{m} (u,v)
 ~\not\equiv~ (0,0) ~(\mod \ell).
\end{equation*} 
Then we have
$$
\frac{\pd \Psi_q}{\pd X}(u,v) \cdot 
\frac{\pd \Psi_q}{\pd Y}(u,v) 
~\not\equiv~ 
0 ~(\mod \ell).
$$
\end{lem}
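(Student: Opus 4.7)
The plan is to reduce everything to the one-variable polynomial $\tilde{\Psi}_q(X) = \Psi_q(X,1)$ and exploit the information about its constant term and discriminant from \lemref{DPsi}, then recover the statement about the two-variable partials via homogeneity.

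First I would argue that $v \not\equiv 0 \pmod{\ell}$. Indeed, $\Psi_q(X,Y) = \prod_{j}(X - (\zeta^j + \zeta^{-j}+2)Y)$ is homogeneous of degree $d = (q-1)/2$ and monic in $X$, so $\Psi_q(u,0) = u^d$. Hence $v \equiv 0 \pmod{\ell}$ together with $\Psi_q(u,v)\equiv 0$ would force $u\equiv 0$, contradicting $(u,v)\not\equiv (0,0)\pmod{\ell}$. Setting $\alpha = u v^{-1} \in \F_\ell$, the assumption $\Psi_q(u,v)\equiv 0 \pmod{\ell}$ becomes $\tilde{\Psi}_q(\alpha) \equiv 0 \pmod{\ell}$. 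Since $|\tilde{\Psi}_q(0)| = 1$ by \lemref{DPsi}, we further get $\alpha \not\equiv 0$, i.e.\ $u \not\equiv 0 \pmod{\ell}$ as well.

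Next I would use the discriminant computation from \lemref{DPsi}, namely $|D(\tilde{\Psi}_q)| = q^{(q-3)/2}$. Because $\ell \ne q$, this discriminant is a unit in $\F_\ell$, so the reduction of $\tilde{\Psi}_q$ mod $\ell$ is separable, and in particular $\tilde{\Psi}_q'(\alpha) \not\equiv 0 \pmod{\ell}$. From the homogeneous identity $\Psi_q(X,Y) = Y^d \tilde{\Psi}_q(X/Y)$, differentiation gives
\begin{equation*}
\frac{\partial \Psi_q}{\partial X}(X,Y) ~=~ Y^{d-1} \tilde{\Psi}_q'(X/Y),
\end{equation*}
so $\frac{\partial \Psi_q}{\partial X}(u,v) = v^{d-1} \tilde{\Psi}_q'(\alpha) \not\equiv 0 \pmod{\ell}$.

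To handle $\partial \Psi_q/\partial Y$, I would invoke Euler's identity for the homogeneous polynomial $\Psi_q$ of degree $d$:
\begin{equation*}
X \frac{\partial \Psi_q}{\partial X}(X,Y) ~+~ Y \frac{\partial \Psi_q}{\partial Y}(X,Y) ~=~ d \cdot \Psi_q(X,Y).
\end{equation*}
Evaluating at $(u,v)$ and reducing mod $\ell$, the right-hand side vanishes, so
\begin{equation*}
v \cdot \frac{\partial \Psi_q}{\partial Y}(u,v) ~\equiv~ -u \cdot \frac{\partial \Psi_q}{\partial X}(u,v) \pmod{\ell}.
\end{equation*}
Since $u$, $v$, and $\frac{\partial \Psi_q}{\partial X}(u,v)$ are all nonzero mod $\ell$, the right-hand side is nonzero, so $\frac{\partial \Psi_q}{\partial Y}(u,v) \not\equiv 0 \pmod{\ell}$, and the product is nonzero mod $\ell$, as required.

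There is no real obstacle; the only thing to be careful about is the case $v \equiv 0 \pmod{\ell}$ (ruled out in the first step using the monicity of $\Psi_q$ in $X$) and the legitimacy of Euler's identity when $\ell \mid d$, which is not an issue since the identity holds at the level of polynomials before reduction.
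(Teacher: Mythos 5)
Your proposal is correct and follows essentially the same route as the paper: show $uv\not\equiv 0\pmod\ell$ using monicity in $X$ and the fact that $\tilde\Psi_q(0)=\pm1$, use the discriminant computation to get $\partial\Psi_q/\partial X(u,v)\not\equiv 0$, and then deduce $\partial\Psi_q/\partial Y(u,v)\not\equiv 0$ from the Euler identity $X\partial_X\Psi_q+Y\partial_Y\Psi_q=\frac{q-1}{2}\Psi_q$. The paper derives that last identity by direct computation from the product form rather than citing it by name, but the content is identical.
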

\begin{proof}
Clearly $v \equiv 0 ~(\mod \ell)$ implies that $u \equiv 0 ~(\mod \ell)$. 
Suppose $u \equiv 0 ~(\mod \ell)$, then we have 
$\Psi_q(u,v) \equiv \Psi_q(0,v) \equiv \pm v^{\frac{q-1}{2}} ~(\mod \ell)$
by applying \lemref{DPsi}. 
This implies that $v \equiv 0 ~(\mod \ell)$. Hence we deduce that
\begin{equation}\label{uv0l}
uv 
~\not\equiv~ 
0 ~(\mod \ell).
\end{equation}
Let $v'$ be an integer such that $v v' \equiv 1 ~(\mod \ell)$. 
Then we have
\begin{equation}\label{pdeq3}
	\Psi_q(u,v) 
	~\equiv~ 
	v^{\frac{q-1}{2}} \Psi_q(uv',1) ~(\mod \ell),
\end{equation}
\begin{equation}\label{pdeq4}
	\frac{\pd \Psi_q}{\pd X}(u,v) 
	~\equiv~ 
	v^{\frac{q-3}{2}} \frac{\pd \Psi_q}{\pd X}(uv',1)  ~(\mod \ell).
\end{equation}
Let $g(X) \equiv \Psi_q(X,1) ~(\mod \ell) \in \F_{\ell}[X]$, where $\F_\ell$ is a finite field with $\ell$
elements. Also let $w \equiv uv' ~(\mod \ell) \in \F_{\ell}$. 
Then from \eqref{pdeq3}, we have $g(w) = 0$ in $\F_{\ell}$ and from \lemref{DPsi}, 
we get $D(g) \neq 0$ in $\F_{\ell}$, since $\ell \neq q$. This implies that $g'(w) \neq 0$. 
Hence from \eqref{pdeq4}, we deduce that
\begin{equation}\label{delx0}
	\frac{\pd \Psi_q}{\pd X}(u,v) 
	~\not\equiv~ 0 ~(\mod \ell).
\end{equation}
The partial derivatives of $\Psi_q$ are given by
\begin{equation*}
\begin{split}
&\frac{\pd \Psi_q}{\pd X}(X,Y)
~=~ \sum_{j=1}^{\frac{q-1}{2}} ~\prod_{\substack{r=1 \\ r\neq j}}^{\frac{q-1}{2}} \(X-\lambda_rY\) 
\phantom{mm}\text{and}\phantom{mm}
\frac{\pd \Psi_q}{\pd Y}(X,Y) 
~=~ \sum_{j=1}^{\frac{q-1}{2}} \(-\lambda_j\) 
\prod_{\substack{r=1 \\ r\neq j}}^{\frac{q-1}{2}} \(X-\lambda_r Y\).
\end{split}
\end{equation*}
Here $\lambda_j = \z^j+\z^{-j}+2$ and $\z =\z_q$.
Hence we deduce that
\begin{equation}\label{DifPsiuv}
\begin{split}
X \frac{\pd \Psi_q}{\pd X}(X,Y)
~+~ Y \frac{\pd \Psi_q}{\pd Y}(X,Y) 
~=~ \sum_{j=1}^{\frac{q-1}{2}} \(X-\lambda_j Y\) 
\prod_{\substack{r=1 \\ r\neq j}}^{\frac{q-1}{2}} \(X- \lambda_r Y\) 
~=~ \frac{q-1}{2} \Psi_q(X,Y).
\end{split}
\end{equation}
From \eqref{uv0l}, \eqref{delx0} and \eqref{DifPsiuv}, we deduce that
\begin{equation*}
\frac{\pd \Psi_q}{\pd Y}(u,v) 
~\not\equiv~ 0 ~(\mod \ell).
\end{equation*}
\end{proof}

\section{Largest prime factor of Fourier coefficients at prime powers}

\subsection{Trace zero elements in the image of the symmetric $(q-1)$-th power 
of a Galois representation}

Let the notations be as before.
For any integer $n \geq 1$  and primes $q, \ell$ with $q$ odd, we have
\begin{equation*}\label{rhoClq-1}
|\tilde{\rho}_{\ell^n, q-1}(\G)|
~=~ 
\frac{|\tilde{\rho}_{\ell^n}(\G)|}{|\tx{Ker}(Sym^{q-1}) 
	 \cap \tilde{\rho}_{\ell^n}(\G)|}
\phantom{mm}\tx{and}\phantom{mm}
|C_{\ell^n, q-1}| 
~=~ 
\frac{|D_{\ell^n, q-1}|}{|\tx{Ker}(Sym^{q-1}) 
		\cap \tilde{\rho}_{\ell^n}(\G)|},
\end{equation*}
where 
$$
D_{\ell^n,q-1} 
~=~
\{ A \in \tilde{\rho}_{\ell^n}(\G) ~:~ \Psi_{q}\(\tr(A)^2, \det(A)\) = 0\}.
$$
Hence we have
\begin{equation}\label{rhoq-1=}
\d_{q-1}(\ell^n) 
~=~
\frac{|C_{\ell^n, q-1}|}{|\tilde{\rho}_{\ell^n, q-1}(\G)|}
~=~
\frac{|D_{\ell^n, q-1}|}{|\tilde{\rho}_{\ell^n}(\G)|}.
\end{equation}
We know from the works of Carayol \cite{Ca}, 
Momose \cite{Mo}, Ribet \cite{Ri75,Ri85}, Serre \cite{Se76}
and Swinnerton-Dyer \cite{Sw} that the image of $\rho_\ell$ is open in $\GL_2(\Z_\ell)$ and 
\begin{equation}\label{Gimg}
\rho_{\ell}(\G)
~\subseteq~
\{ A \in \GL_2(\Z_{\ell}) 
	~:~ \det (A) \in (\Z^{\times}_{\ell})^{k-1}\}
\end{equation}
and equality holds when $\ell$ is sufficiently large (see Eq. 9, Lemma 1 and Theorem 4 of \cite{Sw}).
Hence for any $n \in \N$, we have
\begin{equation}\label{imglnGdet}
\tilde{\rho}_{\ell^n}(\G)
~\subseteq~
\{ A \in \GL_2(\Z/\ell^n \Z) 
~:~ \det (A) \in ((\Z/\ell^n \Z)^{\times})^{k-1}\}
\end{equation}
and equality holds when $\ell$ is sufficiently large.
Let $d = \tx{gcd}(\ell-1, k-1)$, then we have
\begin{equation}\label{Tl}
|\tilde{\rho}_{\ell}(\G)| 
~=~
 \frac{(\ell^2-1)(\ell^2-\ell)}{d} 
 \phantom{mm}\tx{and}\phantom{mm} 	
|\tilde{\rho}_{\ell^n}(\G)| 
~=~
 \ell^{4(n-1)} |\tilde{\rho}_{\ell}(\G)|
\end{equation}
for all sufficiently large $\ell$ (see \cite[\S3]{GM}).
 We also have  
\begin{equation}\label{Tlall}
|\tilde{\rho}_{\ell}(\G)| 
~\gg~
\ell^4,
\end{equation}
where the implied constant depends only on $f$ 
(see \cite[Lemma 5.4]{MM84}, \cite[Prop. 17]{Se81}).

\begin{lem}\label{deq-1l}
Let $q, \ell$ be primes with $q$ odd. Then 
$\d_{q-1}(\ell) = 0$ unless $\ell \equiv 0, \pm 1 ~(\mod q)$
and 
$$
\d_{q-1}(\ell) \ll \frac{q}{\ell},
$$
where the implied constant depends only on $f$. Also we have
\begin{equation*}
\d_{q-1}(\ell) 
~=~
\begin{cases}
& \frac{q-1}{2} \frac{1}{\ell-1}, 
\phantom{mm}\tx{if}\phantom{mm} 
\ell \equiv 1~ (\mod q) \\
& \frac{q-1}{2}   \frac{1}{\ell+1}, 
\phantom{mm}\tx{if}\phantom{mm} 
\ell \equiv -1~ (\mod q) \\
& \frac{q}{q^2 -1}, \phantom{mmm}\tx{if} \phantom{mm} \ell = q 
\end{cases}
\end{equation*}
for all sufficiently large $\ell$.
\end{lem}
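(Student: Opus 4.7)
My plan is to compute $\delta_{q-1}(\ell) = |D_{\ell,q-1}|/|\tilde{\rho}_\ell(\G)|$ by analyzing separately the roots of $\tilde{\Psi}_q(X) := \Psi_q(X,1)$ modulo $\ell$ and the fibers of the map $A \mapsto (\tr A, \det A)$ on $\GL_2(\F_\ell)$. Since $\det A \ne 0$ for invertible $A$, the identity $\Psi_q(X,Y) = Y^{(q-1)/2}\tilde{\Psi}_q(X/Y)$ rewrites the defining condition of $D_{\ell,q-1}$ as $\tilde{\Psi}_q(\tr(A)^2/\det(A)) \equiv 0 \pmod{\ell}$.

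For the vanishing of $\delta_{q-1}(\ell)$ outside $\ell \equiv 0, \pm 1 \pmod{q}$, I would argue that the roots of $\tilde{\Psi}_q$ in $\overline{\Q}$ are the conjugates of $\zeta_q + \zeta_q^{-1} + 2$, generating $\Q(\zeta_q)^+$, whose Galois group is $(\Z/q\Z)^\times/\{\pm 1\}$. By \lemref{DPsi} the discriminant of $\tilde{\Psi}_q$ is a power of $q$, so modulo $\ell \ne q$ the polynomial is separable; Frobenius at $\ell$ then acts on its roots via $\zeta_q^j \mapsto \zeta_q^{j\ell}$, with orbit sizes equal to the order of $\ell$ in $(\Z/q\Z)^\times/\{\pm 1\}$. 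Hence $\tilde{\Psi}_q$ has a root in $\F_\ell$ iff Frobenius is trivial iff $\ell \equiv \pm 1 \pmod{q}$. For the crude bound $\delta_{q-1}(\ell) \ll q/\ell$ valid for every $\ell$, I would bound the number of admissible $(t,d) \in \F_\ell \times \F_\ell^\times$ by $(q-1)(\ell-1)$ (at most $(q-1)/2$ roots $\alpha$, two signs of $t$ per $\alpha$, $\ell-1$ values of $d$), each $(\tr, \det)$-fiber by $\ell(\ell+1)$, and invoke $|\tilde{\rho}_\ell(\G)| \gg \ell^4$ from \eqref{Tlall}. The case $\ell = q$ is handled via $\Phi_q(X,Y) \equiv (X-Y)^{q-1} \pmod q$, which combined with \eqref{PhPs} gives $\tilde{\Psi}_q(X) \equiv (X-4)^{(q-1)/2} \pmod q$, so the condition reduces to $\tr(A)^2 \equiv 4\det(A)$ and yields $O(q^3)$ matrices.

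For the explicit formulas at sufficiently large $\ell$, I would use \eqref{Tl} to write $|\tilde{\rho}_\ell(\G)| = \ell(\ell-1)^2(\ell+1)/\mathfrak{d}$ with $\mathfrak{d} = \gcd(\ell-1, k-1)$, noting that $\mathfrak{d}$ is odd because $k$ is even for cuspforms on $\Gamma_0(N)$ with trivial character. The key algebraic input is that every root $\alpha$ of $\tilde{\Psi}_q$ factors as $\alpha = (s+s^{-1})^2$ and $\alpha - 4 = (s - s^{-1})^2$, where $s$ is a primitive $q$-th root of unity in $\F_{\ell^2}$. A Frobenius computation shows $\alpha$ is always a nonzero square in $\F_\ell$ (since $s+s^{-1}$ is Frobenius-fixed), while $\alpha - 4$ is a nonzero square when $\ell \equiv 1 \pmod q$ (then $s \in \F_\ell$) and a non-square when $\ell \equiv -1 \pmod q$ (then $s - s^{-1}$ lies in the $(-1)$-eigenspace of Frobenius). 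Consequently, $t^2 = \alpha d$ forces $d$ to be a square and yields two values of $t$; the discriminant $(\alpha - 4)d$ of the characteristic polynomial then dictates whether the Jordan type is split or non-split, giving conjugacy class size $\ell(\ell+1)$ or $\ell(\ell-1)$ respectively. Counting squares in $(\F_\ell^\times)^{k-1}$ as $(\ell-1)/(2\mathfrak{d})$ (using $\mathfrak{d}$ odd), summing over the $(q-1)/2$ roots $\alpha$, and dividing by $|\tilde{\rho}_\ell(\G)|$ produces the stated formulas for $\ell \equiv \pm 1 \pmod q$. For $\ell = q$, the condition $\tr(A)^2 \equiv 4\det(A)$ counts scalars $\lambda I$ with $\lambda^2 \in (\F_q^\times)^{k-1}$ and Jordan conjugacy classes with the same constraint on $\det$: there are $(q-1)/\mathfrak{d}$ of each, with Jordan classes of size $q^2-1$, giving $|D_{q,q-1}| = q^2(q-1)/\mathfrak{d}$ and thus $\delta_{q-1}(q) = q/(q^2-1)$.

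The main obstacle is the case analysis in the third paragraph: one must correctly identify the Jordan type (and hence the conjugacy class size) of the contributing matrices from the squareness of $(\alpha - 4)d$, and carefully count $\{d \in (\F_\ell^\times)^{k-1} : d \text{ is a square}\}$ using the parity of $\mathfrak{d}$.
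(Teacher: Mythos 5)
Your proposal is correct and arrives at the same formulas, but by a genuinely different route. The paper works conjugacy-class type by type in $\GL_2(\F_\ell)$ (scalar, Jordan, split $\alpha_{a,b}$, non-split $\beta_{b,c}$), deduces in the split case that $b/a$ must be a nontrivial $q$-th root of unity in $\F_\ell$ (hence $q \mid \ell-1$), and in the non-split case that $(1+\sqrt{\eta}t)/(1-\sqrt{\eta}t)$ must be a nontrivial $q$-th root of unity in $\F_{\ell^2}^\times \setminus \F_\ell^\times$ (hence $q \mid \ell+1$), counting pairs $(a,b)$ resp.\ $(b,c)$ directly. You instead fiber over $(\tr, \det)$: you rewrite the condition as $\tilde{\Psi}_q(\tr(A)^2/\det(A)) = 0$, use the factorizations $\alpha = (s+s^{-1})^2$ and $\alpha - 4 = (s-s^{-1})^2$ with $s$ a primitive $q$-th root of unity, and read off the squareness of $\alpha$ and $\alpha - 4$ from how Frobenius acts on $s$. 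This cleanly decides whether $(\alpha - 4)\det(A)$ is a square or not, hence whether the conjugacy class is split or non-split, and the residue $\ell \equiv 1$ or $-1 \pmod q$ is governed by the squareness of $\alpha - 4$ rather than by where the relevant $q$-th root of unity lives. What your route buys: the $\pm 1 \pmod q$ dichotomy, the class-size $\ell(\ell\pm 1)$ dichotomy, and the constraint that $\det(A)$ be a square all fall out of a single algebraic fact; you also sidestep the double-count of $\alpha_{a,b}$ vs.\ $\alpha_{b,a}$ that the paper must correct for. What the paper's route buys: it is more hands-on, requires no observation about $(s \pm s^{-1})^2$, and explicitly disposes of the small cases $\ell = 2$ and scalar/Jordan types for $\ell \ne q$ (your argument handles these implicitly: $\alpha = 4$ cannot occur mod $\ell \ne q$ since $\tilde{\Psi}_q(4) = q$, and $\ell = 2$ is absorbed by the crude bound). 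Both routes use $\mathfrak{d} = \gcd(\ell-1, k-1)$ odd (the paper via $\gcd(2, d) = 1$ in solving $2j \equiv -r \pmod d$, you via counting squares in $(\F_\ell^\times)^{\mathfrak{d}}$), and both handle $\ell = q$ by reducing to $\tr(A)^2 = 4\det(A)$ via $\Phi_q(X,Y) \equiv (X-Y)^{q-1} \pmod q$ (the paper implicitly through the same conjugacy-class count, you explicitly). I verified your counts reproduce $|D_{\ell,q-1}| = \frac{(q-1)(\ell-1)}{2\mathfrak{d}}\ell(\ell \pm 1)$ for $\ell \equiv \pm 1 \pmod q$ and $|D_{q,q-1}| = q^2(q-1)/\mathfrak{d}$, matching the paper.
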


\begin{proof}
Set
\begin{equation*}
\begin{split}
  D'_{\ell, q-1} 
  = \{A \in \GL_2(\F_{\ell}) ~:~ \det(A) \in (\F^{\times}_{\ell})^{k-1},~ \Psi_{q}\(\tr(A)^2, \det(A)\) = 0 \}.
 \end{split}
\end{equation*}
Then we have $D_{\ell, q-1} \subseteq D'_{\ell, q-1}$ and
 equality holds if $\ell$ is sufficiently large (see Eq. \eqref{imglnGdet}).

We first suppose that $\ell \neq 2$. The conjugacy classes of $\GL_2(\F_{\ell})$ are one of the following four types:
\begin{equation*}
\begin{split}
&\a_a 
~=~
\begin{pmatrix}
a & 0 \\
0 & a
\end{pmatrix},
\phantom{mmmmmmm}
\tilde{\a}_a 
~=~
\begin{pmatrix}
	a & 1 \\
	0 & a
\end{pmatrix}, \\
& \a_{a,b}
~=~
\begin{pmatrix}
	a & 0 \\
	0 & b
\end{pmatrix},~ a ~\neq~ b,
\phantom{mm}
\b_{b,c}
~=~
\begin{pmatrix}
	c & \eta b \\
	b & c
\end{pmatrix},
\end{split}
\end{equation*}
where $a, b \in \F^{\times}_{\ell},~ c \in \F_{\ell}$ 
and $\eta$ is a fixed element of $ \F^{\times}_{\ell}$ so that
 $\{1, \sqrt{\eta}\}$ is a basis for $\F_{\ell^2} = \F_{\ell}(\sqrt{\eta})$ over $\F_{\ell}$. 
The number of elements in these classes are $1, \ell^2-1, \ell^2+\ell$ and $\ell^2-\ell$ 
respectively (see page 68 of \cite{FH}). Also note that $\a_{a, b},
\a_{b, a}$ as well as $\b_{b,c}, \b_{-b,c}$ are conjugates of each other.

Suppose that $\ell \neq q$. If $\a_a \in D'_{\ell, q-1}$, then we have
$$
a^2 ~\in~ (\F^{\times}_{\ell})^{k-1} 
\phantom{mm}\tx{and}\phantom{mm} 
\Psi_q(4a^2,a^2) ~=~ 0.
$$
Since 
$\Psi_q(4a^2,a^2)
= (a^2)^{\frac{q-1}{2}} \Psi_q(4,1)
= a^{q-1} \Phi_q(1,1)
= a^{q-1}q$, 
we get $a = 0$, a contradiction. 
Hence no conjugacy class of type $\a_a$ belongs to $D'_{\ell, q-1}$. 
By the same arguments, $D'_{\ell, q-1}$ contains 
no element of $\GL_2(\F_{\ell})$ which is conjugate to 
$\tilde{\a}_a$ for some $a \in \uFl$.

Suppose $\a_{a,b} \in D'_{\ell, q-1}$, then we have
\begin{equation*}\label{aad}
ab ~\in~ (\uFl)^{k-1} 
\phantom{mm}\tx{and}\phantom{mm} 
\Psi_q((a+b)^2, ab) ~=~ 0.
\end{equation*}
Let $t = b a^{-1}$, 
where $a^{-1}$ denotes the multiplicative inverse of $a \in \F_{\ell}^{\times}$. 
Then we have
$$
a^2 t ~\in~ (\uFl)^{k-1} ~,
\phantom{mm} 
\Psi_q(a^2(1+t)^2, a^2t) ~=~ 0 
\phantom{mm}\tx{and}\phantom{mm}
 t ~\neq~ 1 .
$$
By using the fact that 
$t^q-1 = (t-1) \Psi_q((1+t)^2,t)$, we get
\begin{equation}\label{tql}
t^q ~=~ 1, \phantom{mm} t ~\neq~ 1.
\end{equation} 
The above equation \eqref{tql} has a solution in $\F_{\ell}$ 
only when $q \mid \ell-1$ and in that case
it has exactly $q-1$ solutions. 
We claim that for any $t \in \uFl$,
\begin{equation*}
\#\{ a \in \uFl ~:~ a^2 t \in  (\uFl)^{k-1}\} = \frac{\ell -1}{d},
\end{equation*}
where $d = (\ell-1, k-1)$.
Let $g$ be a primitive root modulo $\ell$ i.e., $\uFl =~ <g>$ 
and write $a = g^j,~ t = g^r$. 
We note that $(\uFl)^{k-1} = (\uFl)^d$ and 
$a^2 t \in  (\uFl)^{k-1}$ is equivalent to the fact that
\begin{equation}\label{a2t}
2j + r ~\equiv~ sd~ (\mod~ \ell-1) \phantom{m}\tx{for some } s \in \F_{\ell}.
\end{equation}
It implies that $2j \equiv -r ~(\mod d)$ and this congruence has a unique solution
since $(2,d) = 1$. Hence \eqref{a2t} has exactly $\frac{\ell-1}{d}$ solutions. This proves our claim.
Thus if $q \mid \ell-1$, then $D'_{\ell,q-1}$ contains 
exactly $\frac{(q-1)}{2}\frac{(\ell-1)}{d}$ conjugacy classes 
of the form $\a_{a,b}$. If $ q\nmid \ell -1$, there are no
conjugacy class  of the form $\a_{a,b}$ which belongs to $D'_{\ell,q-1}$.

Suppose  $\b_{b,c} \in D'_{\ell,q-1}$, then we have
\begin{equation}\label{bbc}
c^2-\eta b^2 ~\in~ (\uFl)^{k-1} 
\phantom{mm}\tx{and}\phantom{mm} 
\Psi_q(4c^2, c^2-\eta b^2) ~=~ 0.
\end{equation}
Note that $c \neq 0$. Otherwise, we have 
$$
\Psi_q(4c^2, c^2-\eta b^2) 
~=~ \pm(-\eta b^2)^{\frac{q-1}{2}} ~=~ 0.
$$
This implies that $b = 0$, a contradiction.
Set $t = bc^{-1}$. From \eqref{bbc}, 
we get 
$$
\Psi_q(4c^2, c^2(1-\eta t^2)) 
~=~ c^{q-1}\Psi_q(4, 1-\eta t^2) ~=~ 0.
$$
Hence we get $\Psi_q(4, 1-\eta t^2) = 0$.
Note that 
$1-\eta t^2 = (1+\sqrt{\eta}t) (1-\sqrt{\eta}t)$
 in $\F_{\ell^2}$.
 Hence we have 
$$
\Psi_q(4,1-\eta t^2) 
~=~ \Psi_q(2^2,(1+\sqrt{\eta}t) (1-\sqrt{\eta}t))
~=~ \frac{(1+\sqrt{\eta}t)^q - (1-\sqrt{\eta}t)^q}{2 \sqrt{\eta}t}.
$$
This implies that 
$(1+\sqrt{\eta}t)^q - (1-\sqrt{\eta}t)^q = 0$. 
By noting that $1- \sqrt{\eta}t \neq 0$, we obtain
$$
\(\frac{1+\sqrt{\eta}t}{1-\sqrt{\eta}t}\)^{q} ~=~ 1.
$$
Hence we get $q \mid \ell^2- 1$. Further note that
$q \nmid \ell-1$ as otherwise
$$
\frac{1+\sqrt{\eta}t}{1-\sqrt{\eta}t} ~\in~ \uFl 
$$
and hence $\sqrt{\eta} \in \uFl$, a contradiction. 
Thus we have $q \mid \ell+1$.
 We claim that for every solution $x_0$ of $X^q =1$ in $ \F_{\ell^2}$,
 there exists a unique $t \in \F_{\ell}$ such that $x_0= \frac{1+\sqrt{\eta}t}{1-\sqrt{\eta}t}$. 
 Write $x_0 = u +\sqrt{\eta}v \in \F_{\ell^2}$ with $u, v \in \F_{\ell}$. Then we 
 get $(u^2-\eta v^2)^q = 1$. Moreover, $u^2-\eta v^2 \in \uFl$ and $q \nmid (\ell-1)$ implies that
 $u^2-\eta v^2=1$. Note that $u \neq -1$, otherwise we get $v=0$ and hence 
 we obtain $x_0^q = (-1)^q =-1$, a contradiction. We see that $t=v (u+1)^{-1}$ is the 
 unique element in $\F_{\ell}$ such that
 $$
x_0 ~=~ u +\sqrt{\eta}v ~=~ \frac{1+\sqrt{\eta}t}{1-\sqrt{\eta}t}.
 $$
As before for any $t \in \uFl$, we have
\begin{equation*}
\#\{ c \in \uFl ~:~ c^2(1-\eta t^2)  \in  (\uFl)^{k-1}\} 
~=~ \frac{\ell -1}{d},
\end{equation*}
where $d = (\ell-1, k-1)$.
Thus we deduce that $D'_{\ell, q-1}$ contains exactly $\frac{(q-1)}{2}\frac{(\ell-1)}{d}$ 
conjugacy classes of type $\b_{b,c}$ when $q \mid \ell + 1$. When
$q \nmid \ell +1$, there are no such conjugacy classes in $D'_{\ell, q-1}$.

Now suppose that $\ell = q$. Arguing as before, we deduce that
$D_{q, q-1}^{'}$ contains $\frac{q -1}{d}$ conjugacy classes of type $\a_a$ 
and $\frac{q -1}{d}$ conjugacy classes of type $\tilde{\a}_a$. Further
arguments as before will imply that $D_{q, q-1}^{'}$ does not contain any 
element which is conjugate to $\a_{a,b}$ or $\b_{b,c}$. 

When $\ell =2$, $\GL_2(\F_{2})$ is a union of $3$ conjugacy classes whose representatives are  given by
\begin{equation*}
\begin{pmatrix}
1 & 0 \\
0 & 1
\end{pmatrix},
~~~
\begin{pmatrix}
	1 & 1 \\
	0 & 1
\end{pmatrix}
\phantom{m}\tx{and}\phantom{m}
\begin{pmatrix}
	1 & 1 \\
	1 & 0
\end{pmatrix}.
\end{equation*} 
The number of elements in these classes are $1, 3$ and $2$ respectively. 
Arguing as before, we can show that $|D'_{2,2}|=2$ and $|D'_{2,q-1}| =0$ for $q \geq 5$.
Hence we conclude that
\begin{equation}\label{dlq-1}
|D_{\ell, q-1}^{'}| 
~=~ 
\begin{cases}
& \frac{(q-1)(\ell-1)}{2d} (\ell^2+\ell) ,
\phantom{mm} \tx{if}\phantom{mm} 
\ell \equiv 1 ~(\mod q) \\
& \frac{(q-1)(\ell-1)}{2d} (\ell^2-\ell) ,
\phantom{mm} \tx{if}\phantom{mm} 
\ell \equiv -1 ~(\mod q) \\
& \frac{q^2 (q-1)}{d}  ,
\phantom{mmmmmmm} \tx{if}\phantom{mm} 
\ell = q \\
& 0, \phantom{mmmmmmmmmm}\tx{otherwise}.
\end{cases}
\end{equation}
Now \lemref{deq-1l} follows from \eqref{rhoq-1=}, \eqref{Tl}, \eqref{Tlall} and \eqref{dlq-1}.
\end{proof}

\begin{lem}\label{dq-1lm}
For any integer $n \geq 2$ and primes $\ell, q$ with $q$ odd, we have
$$
\d_{q-1}(\ell^n) 
~\ll~
 \frac{1}{\ell^{n-1}} \d_{q-1}(\ell),
$$
where the implied constant depends only on $f$.
We also have
$$
\d_{q-1}(\ell^n) ~=~ \frac{1}{\ell^{n-1}} ~\d_{q-1}(\ell)
$$
if $\ell \neq q$ and $\ell$ is sufficiently large.
Further $\d_{q-1}(q^n) = 0$ for $q \geq 5$.
\end{lem}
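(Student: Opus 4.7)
The plan is to derive all three statements via a Hensel-type lifting argument applied to the polynomial $\Psi_q(X,Y)$. Starting from \eqref{rhoq-1=}, namely $\d_{q-1}(\ell^n) = |D_{\ell^n,q-1}|/|\tilde{\rho}_{\ell^n}(\G)|$, the denominator is controlled via \eqref{Tl} and \eqref{Tlall} (yielding $|\tilde{\rho}_{\ell^n}(\G)| \asymp \ell^{4(n-1)} |\tilde{\rho}_{\ell}(\G)|$ uniformly in $\ell$, with equality for all but finitely many $\ell$), so the core task reduces to estimating $|D_{\ell^n,q-1}|$. I would treat $\ell = q$ and $\ell \neq q$ separately.

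For the vanishing claim ($\ell = q \geq 5$), the key observation is that every $A \in \tilde{\rho}_{q^n}(\G)$ has $\det A$ a unit modulo $q^n$ by \eqref{imglnGdet}, so $q \nmid \gcd(\tr(A)^2, \det A)$. Using the homogeneity $\Psi_q(dU, dV) = d^{(q-1)/2}\Psi_q(U,V)$, one factors out this gcd to pass to a coprime pair, after which \lemref{vPsi} applied with $m = q$ and $p = q$ forces $\nu_q(\Psi_q(\tr(A)^2, \det A)) \leq 1 < n$. Hence $D_{q^n,q-1} = \varnothing$ and $\d_{q-1}(q^n) = 0$.

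For $\ell \neq q$, I count lifts of each $A_0 \in D_{\ell,q-1}$ through the reduction $\tilde{\rho}_{\ell^n}(\G) \twoheadrightarrow \tilde{\rho}_{\ell}(\G)$. Since $\det A_0 \in \F_\ell^\times$, \lemref{pdPsi} applies at $(\tr(A_0)^2, \det A_0)$, giving $\pd_Y \Psi_q(\tr(A_0)^2, \det A_0) \not\equiv 0 \pmod \ell$. The one-variable Hensel lemma applied to $v \mapsto \Psi_q(u^2, v)$ then supplies, for each of the $\ell^{n-1}$ lifts $u$ of $\tr A_0$ in $\Z/\ell^n\Z$, a unique lift $v$ of $\det A_0$ with $\Psi_q(u^2, v) \equiv 0 \pmod{\ell^n}$, producing $\ell^{n-1}$ admissible pairs $(u,v)$. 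Because no scalar matrix can lie in $D_{\ell,q-1}$ for $\ell \neq q$ (as established inside the proof of \lemref{deq-1l}), $A_0$ is non-scalar, whence the linear map $B \mapsto (\tr B,\tr(\tx{adj}(A_0) B))$ on $M_2(\Z/\ell^{n-1}\Z)$ is surjective. A Hensel-type iteration absorbing the quadratic $\ell \det B$ term then shows each admissible $(u,v)$ is attained by at most $\ell^{2(n-1)}$ matrix lifts of $A_0$, with equality once \eqref{imglnGdet} and \eqref{Tl} describe the full image for sufficiently large $\ell$. Summing gives $|D_{\ell^n,q-1}| \leq \ell^{3(n-1)}|D_{\ell,q-1}|$, with equality for all but finitely many $\ell \neq q$. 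Dividing by $|\tilde{\rho}_{\ell^n}(\G)|$ and invoking \eqref{Tl} then yields the exact formula $\d_{q-1}(\ell^n) = \ell^{-(n-1)}\d_{q-1}(\ell)$ for $\ell \neq q$ sufficiently large, while the finitely many exceptional primes contribute only a constant absorbed into the implied $\ll$.

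The principal obstacle is controlling the fiber count $\ell^{2(n-1)}$ rigorously in the presence of the nonlinear $\ell \det B$ term in the determinant expansion of $A_0 + \ell B$, and ensuring that lifts remain inside $\tilde{\rho}_{\ell^n}(\G)$ rather than just in $\GL_2(\Z/\ell^n\Z)$; both are resolved by the non-scalar nature of $A_0$ together with the full-image description of \eqref{imglnGdet}.
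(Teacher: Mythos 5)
Your argument is correct in spirit but takes a genuinely different computational route from the paper for the case $\ell \neq q$. The paper lifts one power of $\ell$ at a time (from $\ell^{n-1}$ to $\ell^n$), Taylor-expands $\Psi_q(\tr(A)^2,\det(A))$ around $(r^2,s)$ to get a \emph{linear} congruence modulo $\ell$ in the four matrix entries of the correction, and shows at least one coefficient is a unit using \lemref{pdPsi} together with the fact that $A_0$ cannot be scalar. You instead lift all the way from $\ell$ to $\ell^n$ in one pass and decompose the count into two stages: first the number of admissible $(\tr,\det)$ pairs (via one-variable Hensel in the $\det$ variable using $\partial_Y\Psi_q\not\equiv 0 \pmod \ell$), then the number of matrices realising a given $(\tr,\det)$ pair over a fixed reduction $A_0$ (via surjectivity of $B \mapsto (\tr B, \tr(\mathrm{adj}(A_0)B))$, which needs $A_0$ non-scalar). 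Both approaches hinge on exactly the same two inputs --- \lemref{pdPsi} and the non-scalarity of elements of $D_{\ell,q-1}$ --- so the difference is organizational rather than ideological; what your decomposition buys is conceptual cleanliness, at the cost of having to control the nonlinear $\ell\det B$ term across $n-1$ levels simultaneously, which the paper's one-step-at-a-time scheme sidesteps entirely. The passage from the count of $|D_{\ell^n,q-1}|$ to $\d_{q-1}(\ell^n)$ via \eqref{rhoq-1=}, \eqref{Tl}, \eqref{Tlall}, and the use of \eqref{imglnGdet} to upgrade the upper bound to an equality for large $\ell$, are handled the same way in both.

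There is one concrete gap: you treat $\ell \neq q$ and $\ell = q \geq 5$, but the lemma also asserts the bound $\d_{q-1}(\ell^n) \ll \ell^{-(n-1)}\d_{q-1}(\ell)$ in the remaining case $\ell = q = 3$, and your argument does not cover it. Neither of your two branches applies there: \lemref{pdPsi} explicitly requires $\ell \neq q$, and \lemref{vPsi} (which powers the vanishing for $\ell = q \geq 5$) excludes $m=3$, and indeed $\d_2(3^n)$ does \emph{not} vanish. The paper closes this case with a separate short computation establishing $|L_{3^n,2}(A_0)| \leq 3^3$ directly. In fact your framework would also work here with minimal extra effort, since $\Psi_3(X,Y)=X-Y$, so $\partial_Y\Psi_3 \equiv -1$ is a unit modulo $3$ without appealing to \lemref{pdPsi} at all, and the non-scalarity of $A_0$ can be checked by hand; but this needs to be said, as the general machinery you invoke does not automatically reach $\ell = q = 3$.
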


\begin{proof}
Note that
\begin{equation}\label{Dlneqlift}
D_{\ell^n, q-1} ~\subseteq~  
\left\{A \in \GL_2\(\Zln\) ~:~ A~(\mod \ell^{n-1}) \in D_{\ell^{n-1}, q-1},
~ \Psi_{q}(\tr(A)^2, \det(A)) = 0 \right\}
\end{equation}
and the equality holds  if $\ell$ is sufficiently large. 
To see the equality, note that if $\ell$ is sufficiently large, 
then we have (see \eqref{imglnGdet}) 
\begin{equation}\label{imglnlift}
\begin{split}
\tilde{\rho}_{\ell^n}(\G)
~=~ \{ A \in \GL_2\(\Z/\ell^n \Z\) :  A ~(\text{ mod } \ell^{n-1}) \in \tilde{\rho}_{\ell^{n-1}}(\G) \}.	
\end{split}
\end{equation}
If $A \in \GL_2\(\Zln\)$ satisfies $A~(\mod \ell^{n-1}) \in D_{\ell^{n-1}, q-1}$ 
and $\Psi_{q}(\tr(A)^2, \det(A)) = 0$, then from \eqref{imglnlift}, we get 
$A \in  \tilde{\rho}_{\ell^{n}}(\G)$ and  hence $A \in D_{\ell^n, q-1}$, 
since $\Psi_{q}(\tr(A)^2, \det(A)) = 0$.

Let $\cR_n$ denote the natural group homomorphism
$$
\cR_n ~:~ M_2(\Z) ~\rightarrow~  M_2\(\Z/\ell^n\Z\)
$$
given by the reduction modulo $\ell^n$.
Now fix a matrix
\begin{equation*}
A_0 ~=~ 
	\begin{pmatrix}
	a & b \\
	c & d 
	\end{pmatrix}
\in M_2(\Z)
\end{equation*}
such that $\cR_{n-1}(A_0) \in D_{\ell^{n-1}, q-1}$.
Let $B_\ell = \{0,1, \cdots , \ell-1\}$ and for any $\vec{v}=(x,y,z,w) \in B^4_{\ell}$, set
\begin{equation*}
	A_0(\vec{v}) ~=~ 
	\begin{pmatrix}
		a+\ell^{n-1}x & b+\ell^{n-1}y \\
		c+\ell^{n-1}z & d+\ell^{n-1}w
	\end{pmatrix}
	\in M_2(\Z).
\end{equation*}
Also let $L_{\ell^n, q-1}(A_0)$ denote the set of all quadruples 
$\vec{v} \in  B^4_{\ell}$ such that
\begin{equation*}
\Psi_{q}(\tr(A_0(\vec{v}))^2,~ \det(A_0(\vec{v}))) ~\equiv~ 0 ~(\mod \ell^n).
\end{equation*}
Set $r = \tr(A_0), s = \det(A_0)$ and $A = A_0(\vec{v})$.
Then we have $\Psi_q(r^2, s) \equiv 0 ~(\mod \ell^{n-1})$ and
\begin{equation*}
\Psi_q(\tr(A)^2, \det(A)) 
~\equiv~ 
\Psi_q\(r^2 + 2r(x+w)\ell^{n-1}, ~s+(aw+dx-cy-bz)\ell^{n-1}\)
~\equiv~
0 ~(\mod \ell^n).
\end{equation*}
By Taylor's expansion of $\Psi_q$ at $(r^2, s)$, we have
\begin{equation*}
\Psi_q(r^2,s) 
~+~ 2r(x+w) \frac{\pd \Psi_q}{\pd X}(r^2,s) \ell^{n-1} 
~+~ (aw+dx-cy-bz) \frac{\pd \Psi_q}{\pd Y}(r^2, s) \ell^{n-1} 
~\equiv~
 0 ~(\mod \ell^n).
\end{equation*}
This implies that
\begin{equation*}
\alpha_1 x ~+~ \alpha_2 w ~+~ \alpha_3 y ~+~ \alpha_4 z 
~\equiv~
 - \frac{\Psi_q(r^2,s)}{\ell^{n-1}} ~(\mod \ell),
\end{equation*}
where
\begin{equation*}
\begin{split}
& \alpha_1~=~ 2r \frac{\pd \Psi_q}{\pd X}(r^2,s) 
       ~+~ d \frac{\pd \Psi_q}{\pd Y}(r^2,s), 
\phantom{mm} 
\alpha_3 ~=~ -c \frac{\pd \Psi_q}{\pd Y}(r^2,s), \\
& 
\alpha_2 ~=~ 2r \frac{\pd \Psi_q}{\pd X}(r^2,s) 
~+~ a \frac{\pd \Psi_q}{\pd Y}(r^2,s),
\phantom{mm}
\alpha_4 ~=~ -b \frac{\pd \Psi_q}{\pd Y}(r^2,s).
\end{split}
\end{equation*}
Let us first assume that $\ell \neq q$.
We claim that one of 
$\alpha_1,\alpha_2,\alpha_3,\alpha_4$ is not divisible $\ell$. If not, then
$\alpha_1 \equiv \alpha_2 \equiv \alpha_3 \equiv \alpha_4 \equiv 0 ~(\mod \ell)$. 
Applying \lemref{pdPsi}, we then get
$$
b ~\equiv~ c ~\equiv~ 0 ~(\mod \ell) 
\phantom{mm} \tx{and}\phantom{mm} 
a ~\equiv~ d ~(\mod \ell).
$$
Hence we obtain
$$
\Psi_q(\tr(A_0)^2, ~\det(A_0)) 
~\equiv~
 \Psi_q(4a^2, a^2) 
 ~\equiv~
  a^{q-1}q ~\equiv~ 0 ~(\mod \ell).
$$
This implies that $a \equiv 0 ~(\mod \ell)$, 
a contradiction to the fact that 
$\cR_{n-1}(A_0) \in \GL_2(\Z/\ell^{n-1}\Z)$. 
This proves our claim. Hence we deduce that 
$|L_{\ell^n, q-1} (A_0)| = \ell^3$. 
Thus we conclude that $|D_{\ell^n, q-1}| \leq \ell^3 |D_{\ell^{n-1}, q-1}|$
and equality holds if $\ell$ is sufficiently large.

Now let us consider the case when $\ell = q$.  It follows from \lemref{vPsi}
that
$$
\Psi_q(X,Y) ~\equiv~ 0 ~(\mod q^2) 
\phantom{mm}\text{with}\phantom{m}
\tx{gcd}(X,Y,q)~=~ 1
$$ 
has no solution for $q \geq 5$. Hence $|D_{q^n, q-1}| ~=~ 0$ for $q \geq 5$.
Finally, when $q = 3$, arguing as before, we can show that $|L_{3^n,2}(A_0)| \leq 3^3$. 
Hence we obtain
$|D_{3^{n}, 2}| \leq 3^3 |D_{3^{n-1}, 2}|$. Now by applying induction on $n$, we conclude that
\begin{equation}\label{Dln2}
|D_{\ell^n, q-1}| \leq 
\begin{cases}
& \ell^{3(n-1)} |D_{\ell,q-1}|, \phantom{m}\tx{if }\phantom{m}\ell \neq q 
~~\tx{ or }~ \ell=q=3 \\
& 0, \phantom{mmmmmmm}\tx{if}\phantom{m} ~\ell = q \geq 5
\end{cases}
\end{equation}
and equality holds if $\ell$ is sufficiently large.
Now the Lemma follows applying \eqref{rhoq-1=}, \eqref{Tl}, \eqref{Tlall} and \eqref{Dln2}.
\end{proof}

\subsection{Proof of \thmref{thm1} }
Let $f$ be as in \thmref{thm1} and $q$ be an odd prime.
Set
\begin{equation*}
	S_q ~=~ S_{q, y} ~=~ 
	\left\{ p  ~:~ p \nmid N,~ a_f(p^{q-1}) \neq 0, ~ 
	P(a_f(p^{q-1})) \leq y(p) \right\},
\end{equation*}
where $y=y(x)$ is a large positive real number which will be chosen later.
Also let 
$$
S_q(x) =  \{ p\leq x : p \in S_q \}.
$$
Consider the prime factorization of the following product 
\begin{equation}\label{prodafp}
\prod_{p \in S_q(x)} |a_f(p^{q-1})| 
= \prod_{\ell ~\tx{prime}} {\ell}^{\nu_{x, \ell}},
\end{equation}
where
$$
\nu_{x, \ell} 
= \sum_{p \in S_q(x)} \nu_{\ell}(a_f(p^{q-1})).
$$
Using Deligne's bound, we have
\begin{eqnarray}\label{vxq}
\nu_{x,\ell} 
~\leq~ 
\sum_{\substack{p \leq x \\ a_f(p^{q-1}) \neq 0}} 
\nu_{\ell}(a_f(p^{q-1})) \nonumber
&=&
\sum_{\substack{p \leq x \\ a_f(p^{q-1}) \neq 0}} 
 ~~\sum_{\substack{m \geq 1 \\ {\ell}^m | a_f(p^{q-1})}} 1 \\
&=&
\sum_{1 \leq m \leq \frac{\log(q x^{q(k-1)/2})}{\log \ell}} 
\sum_{\substack{p \leq x \\ a_f(p^{q-1}) \neq 0 \\ 
		a_f(p^{q-1}) \equiv 0 (\mod \ell^m)}} 1  \nonumber \\
&=& 
\sum_{1 \leq m \leq \frac{\log(q x^{q(k-1)/2})}{\log \ell}} 
 \pi_{f,~q-1}^{*}(x, \ell^m).
\end{eqnarray}
We choose $c_0$ sufficiently small depending on $f$ so that
$\log x > c d^4 \log (dN)$ (where $c$ is as in \thmref{CTZ})
if $1< d \leq c_0 \frac{(\log x)^{1/4}}{(\log\log x)^{1/4}}$.
Then applying \thmref{CTZ}, we have 
\begin{equation}\label{unpixd}
	\pi_{f,~q-1}^*(x, d) ~\ll~ \d_{q-1}(d) \pi(x).
\end{equation}
Set
\begin{equation*}
	z= c_0 \frac{(\log x)^{1/4}}{(\log\log x)^{1/4}}
\end{equation*}
and $x$ be sufficiently large from now onwards .
Fix a prime $\ell$ such that $\nu_{x,\ell} \neq 0$ and so $\ell \leq y$. 
Set
$$
m_0 = m_0(x, \ell)=  \Big[\frac{\log z}{\log \ell}\Big].
$$ 
  We estimate the sum
 in \eqref{vxq} by dividing it into two parts; 
 for $1\le m \le m_0$ and then 
 for $m> m_0$.
 It follows from \lemref{deq-1l}, \lemref{dq-1lm} and \eqref{unpixd} 
 that the first sum is
\begin{eqnarray}\label{vxq1}
	\sum_{1 \leq m \leq m_0}  \pi_{f,~q-1}^{*}(x, \ell^m) 
	~\ll~ 
	\sum_{1\leq m \leq m_0} q \cdot \frac{ \pi(x)}{\ell^m} 
	~\ll~ 
	q \cdot \frac{\pi(x)}{\ell} .
\end{eqnarray}
The second sum is
\begin{eqnarray}\label{vxq2}
	\sum_{m_0 < m \leq \frac{\log(q x^{q(k-1)/2})}{\log \ell}} 
	\pi_{f,~q-1}^{*}(x, \ell^m)
	&\leq&
	\pi_{f,~q-1}^{*}(x, \ell^{m_0}) 
	\sum_{m_0 < m \leq \frac{\log(qx^{kq})}{\log \ell}} 1 \nonumber\\
	&\ll&
  \frac{q}{\ell^{m_0}}  \pi(x) \cdot\frac{q\log x}{\log \ell}
	\nonumber\\
	& \ll &
	\frac{q^2 x}{\ell^{m_0} \log \ell}
	~\ll~ 
	\frac{q^2 x}{z} \cdot~\frac{ \ell}{\log \ell}.
\end{eqnarray}
From \eqref{vxq}, \eqref{vxq1} and \eqref{vxq2}, we get
\begin{equation}\label{vxq3}
	\nu_{x,\ell} ~\ll~ \frac{q^2 x}{z} \cdot \frac{\ell}{\log \ell}.
\end{equation}
Note that if $\ell \not\equiv 0, \pm 1 ~(\mod q)$, then we have $C_{\ell^m, ~q-1} = \emptyset$ 
(see \lemref{deq-1l} and \lemref{dq-1lm}).
Hence if $\ell^m \mid a_f(p^{q-1})$, then we must have $p \mid \ell N$ (see \S 2.1). 
Further $p \in S_q$ gives us that $p = \ell$. Hence we have 
$\nu_{x, \ell} \leq \nu_{\ell}(a_f(\ell^{q-1})) \ll  kq$ if  $\ell \not\equiv 0, \pm 1 ~(\mod q)$.
It follows from \eqref{prodafp}  that
\begin{equation}\label{log=vxll}
	\sum_{p \in S_q(x)} \log|a_f(p^{q-1})| 
	~=~
	 \sum_{\ell \leq y} \nu_{x, \ell} \log \ell.
\end{equation}
Thus applying \eqref{vxq3} and Brun-Titchmarsh inequality (see \cite{HalR}, Theorem~3.8), we obtain
\begin{equation*}\label{upp}
\begin{split}
\sum_{\substack{\ell \leq y \\ \ell \equiv 0, \pm 1 (\mod q)}}
\nu_{x,\ell} \log \ell
&\leq \nu_{x,q} \log q ~~+  \sum_{\substack{\ell \leq y \\ \ell \equiv \pm 1 (\mod q)}}
\nu_{x,\ell} \log \ell \\
& ~\ll~
\frac{q^3 x}{z} ~~+~
\frac{q^2 x}{z}
\sum_{\substack{\ell \leq y \\\ell \equiv \pm 1 (\mod q)}} \ell 
~\ll~~ 
\frac{q^3 x}{z} ~+~ \frac{q^2 x}{z}  \frac{y^2}{q \log (y/q)} 
\end{split}
\end{equation*}
for all $x$ sufficiently large depending on $f, \e, q$.
Also we have
\begin{equation*}
\begin{split}
\sum_{\substack{\ell \leq y \\ \ell \not\equiv 0, \pm 1(\mod q)}}
 \nu_{x,\ell} ~\log \ell 
~~\ll~~
q \sum_{\ell \leq y} \log \ell 
~\ll~
q y.
\end{split}
\end{equation*}
Hence we conclude that
\begin{equation}\label{vxllog}
 \sum_{\ell \leq y} \nu_{x, \ell} \log \ell
 ~\ll~
 \frac{q^3 x}{z} ~+~ \frac{q x}{z}  \frac{y^2}{ \log (y/q)} 
\end{equation}
for all $x$ sufficiently large depending on $f, \e, q$ and the implied constant depends only on $f$. 

Let $c_2$ be a positive constant such that $q-1 -c_1 \log (q-1) \geq q/2$ 
whenever $q \geq c_2$. Here $c_1$ is a constant as in \lemref{Lafpm}.
Hence for  $q \geq c_2$, by applying \lemref{Lafpm}, we get
\begin{equation}\label{logapg}
	\sum_{p \in S_q(x)} \log|a_f(p^{q-1})| 
	~\gg~
	 q \sum_{p \in S_q(x)} \log p.
\end{equation}
By partial summation, we have
\begin{equation}\label{par}
\sum_{p \in S_q(x)} \log p 
~=~
\# S_q(x) \log x ~+~ O\(\pi(x)\).
\end{equation}
Let $ \e > 0$ be a real number and  $y=y(x) = (\log x)^{1/8} (\log\log x)^{3/8 - \epsilon}$ from now on.
Then from \eqref{log=vxll}, \eqref{vxllog}, \eqref{logapg} and \eqref{par}, we get
\begin{equation*}
	\# S_q(x) ~\ll~ \frac{\pi(x)}{(\log\log x)^{2\e}}
\end{equation*}
for all $x$ sufficiently large depending on $f, \e, q$. This completes the proof of the theorem for $q \geq c_2$ by using \lemref{apn0}.

Now suppose that $q \leq c_2$. By Deligne's bound, we can write
$$
a_f(p) ~=~ 2 p^{\frac{k-1}{2}} \cos \theta_p~,
\phantom{m} 
\theta_p \in [0, \pi].
$$
For any prime $p \nmid N$, by using \eqref{PhPs}, we get
\begin{equation}\label{afPsi}
\begin{split}
a_f(p^{q-1}) ~=~ \Psi_q(a_f(p)^2, p^{k-1}) 
&~=~ \prod_{j=1}^{\frac{q-1}{2}} 
\( a_f(p)^2 - 4 \cos^2(\pi j/q) p^{k-1}\) \\
&~=~ (4 p^{k-1})^{\frac{q-1}{2}}  \prod_{j=1}^{\frac{q-1}{2}}
 \(\cos^2 \theta_p - \cos^2(\pi j/q)\).
\end{split}
\end{equation}
Set $\eta(x) =  \frac{\log\log x}{\sqrt{\log x}}$. 
For $1 \le j \le (q-1)/2$, let
$
\cJ_{x,j,q} =
 \left[ \cos(\pi j/q) - \eta(x), ~ \cos(\pi j/q) + \eta(x)\right]$ 
and 
$ 
 \cJ_{x,j,q}^{-} 
 = \left[ -\cos(\pi j/q) - \eta(x), ~ -\cos(\pi j/q) + \eta(x)\right]
$. 
Also set
\begin{equation*}
	\cI_{x,q} ~=~ [-1,~1]\setminus \(\bigcup_{1 \leq j \leq \frac{q-1}{2}} \(\cJ_{x,j,q} \cup \cJ_{x,j,q}^{-}\)\).
\end{equation*}
Then from \eqref{afPsi}, we get 
\begin{equation}\label{Ieta}
\begin{split}
\#\left\{ p \leq x ~:~  p \nmid N,~ \lambda_f(p) ~\in~ \cI_{x,q}\right\} 
&~\leq~
\#\left\{ p \leq x ~:~ p \nmid N,~ 
|a_f(p^{q-1})| 
~\geq~
(4p^{k-1})^{\frac{q-1}{2}} {\eta(x)}^{q-1} \right\}\\
&
~\leq ~\#\left\{ p \leq x ~:~ p \nmid N,~ 
|a_f(p^{q-1})| 
~\geq~
(4p^{k-1})^{\frac{q-1}{2}} {\eta(p)}^q \right\} 
~+~ O(\sqrt{x}),
\end{split}
\end{equation}
where the last inequality follows from the fact that 
if $x$ is sufficiently large and $p \in [\sqrt{x}, x]$, 
then we have ${\eta(p)}^q \leq {\eta(x)}^{q-1}$.
By \thmref{EffST}, we deduce that
\begin{equation}\label{lambdI}
\#\left\{ p \leq x ~:~  p \nmid N,~ \lambda_f(p) \in \cI_{x,q}\right\} 
~=~
\pi(x) ~+~ O\(\pi(x) \eta(x)\).
\end{equation}
 Set 
 $T 
 = \left\{ p ~:~ p \nmid N, ~ |a_f(p^{q-1})| 
 ~\geq~   (4p^{k-1})^{\frac{q-1}{2}} {\eta(p)}^q   \right\}$ 
and 
$T(x) = \{p \leq x ~:~ p \in T\}$. Also set $W = S_{q} \cap T$ and
$W(x)= \{ p\le x ~:~ p \in W \}$. 
Then from \eqref{Ieta} and \eqref{lambdI}, we get
$$
\# W(x) ~=~ \#S_{q}(x) ~+~ \#T(x) ~-~ \#(S_{q}\cup T)(x) 
		~\geq~ \#S_{q}(x) ~+~ O\(\pi(x) \eta(x)\).
$$
Thus we have
\begin{equation}\label{Lpwx0}
\sum_{p \in S_{q}(x)} \log|a_f(p^{q-1})| 
~\geq ~
\sum_{p \in W(x)} \log|a_f(p^{q-1})| 
~\geq~
\frac{(q-1)(k-1)}{2} \sum_{p \in W(x)} \log p ~+~ O\(\pi(x) \log\log x\).
\end{equation}
By partial summation, we get
\begin{equation}\label{LpWx}
\sum_{p \in W(x)} \log p 
~=~ \#W(x) \log x ~+~ O\(\pi(x)\) 
~\geq~ 
\#S_{q}(x)\log x ~+~ O\(x \eta(x)\). 
\end{equation}
From \eqref{log=vxll}, \eqref{vxllog}, \eqref{Lpwx0} and \eqref{LpWx}, we conclude that
\begin{equation*}
 \#S_q(x)  ~\ll~ \frac{\pi(x)}{(\log\log x)^{2\e}}
\end{equation*}
for all $q \leq c_2$ and $x$ sufficiently large depending on $f, \e$.
This completes the proof of the first part of \thmref{thm1}. 

Set $\tilde{S}_q = \{p : p \nmid N, ~ a_f(p^{q-1}) \neq 0,~ 
P(a_f(p^{q-1})) \leq q^{-\e} (\log p)^{1/8} (\log\log p)^{3/8}\}$.
Also set
$\tilde{S}_q (x) = \{ p \le x ~:~ p \in \tilde{S}_q \}$, ~~
$z = c_0 (\log x)^{1/4} (\log\log x)^{-1/4}$ 
and 
$
y = q^{-\e} (\log x)^{1/8} (\log\log x)^{3/8}.
$
Arguing as before, we can deduce that
$$
\# \tilde{S}_q(x) ~\ll~ \frac{\pi(x)}{q^{2\e}}
$$
for all $x$ sufficiently large depending on $f, \e, q$ and 
the implied constant depends only on $f$.
Thus 
$$
\limsup_{x \to \infty} \frac{\#\tilde{S}_q(x)}{\pi(x)}  ~<~ 1
$$
 if $q$ is sufficiently large. 
This completes the proof of the second part of \thmref{thm1}.

\begin{rmk}
We note that for any real valued non-negative function $u$ 
satisfying $u(x) \to 0$ as $x \to \infty$, the lower bound 
$(\log p)^{1/8}(\log\log p)^{3/8-\e}$ in \thmref{thm1} can be replaced 
with $(\log p)^{1/8}(\log\log p)^{3/8}u(p)$ and also the term $q^{-\e}$ can be replaced with $u(q)$.
\end{rmk}

\subsection{Proof of \rmkref{rmkthm2-1}}
For any prime $p$ and integer $n \geq 1$, the function $\tau$ satisfies 
\begin{equation*}
	\tau(p^{n+1}) ~=~ \tau(p) \tau(p^n)- p^{11} \tau(p^{n-1}).
\end{equation*} 
Hence we get
\begin{equation}\label{Lsaf}
	\tau(p^{n-1}) ~=~ \frac{\a_p^n-\b_p^n}{\a_p-\b_p},
\end{equation}
where $\a_p, \b_p$ are roots of the polynomial 
$x^2 - \tau(p)x + p^{11}$.
From \eqref{Lsaf} and the fact that $\tau$ takes integer values, 
it follows that  
$$
\tau(p^{d-1}) ~\div~ \tau(p^{2n})
\phantom{mm} \text{whenever} \phantom{m}
d ~\div~ 2n+1
$$ 
(see page 37, Theorem IV of \cite{RC} and page 434, Eq. 14 of \cite{CS}).
Hence we get 
\begin{equation*}
	P(\tau(p^{2n})) ~\geq~ P(\tau(p^{d-1})).
\end{equation*}
Now \thmref{thm0} follows from \thmref{thm1} by choosing 
$d$ to be the largest prime factor of $2n+1$. More generally, we can 
deduce \thmref{thm0} for any non-CM cuspidal Hecke eigenform $f$ of 
weight $k$ and level $N$ with integer Fourier coefficients 
by noting that if $d \mid 2n+1$, then we have
$$
P(a_f(p^{2n})) ~\geq~ P(a_f(p^{d-1}))
$$
for any prime $p \nmid N$ with $a_f(p^{2n}) \neq 0$.

\medskip

\subsection{Proof of \corref{Corpm}} Note that if $\tau(p) \neq 0$, then $\tau(p) \mid \tau(p^{2n-1})$ and hence we have $P\(\tau(p^{2n-1})\) \geq P\(\tau(p)\)$ (see Section 5.3). Now \corref{Corpm} follows from \eqref{low-1/8} and \thmref{thm0}.

\subsection{Proof of \thmref{thm2}}
The proof follows along the lines of the proof of \thmref{thm1}.
 For any real number $0 < c <1$, let
\begin{equation*}
	S_{q,c} ~=~ \left\{p  ~:~ p \nmid N,~ a_f(p^{q-1}) \neq 0, ~
	 P(a_f(p^{q-1})) ~\leq~ c p^{1/14}(\log p)^{2/7}\right\}.	 
\end{equation*}
Also let $S_{q,c}(x) = \{  p \leq x ~:~ p \in S_{q,c} \}$. We will show that if $c$ is sufficiently small, then
\begin{equation*}\label{limsup}
	\limsup_{x \rightarrow \infty} \frac{\#S_{q,c}(x)}{\pi(x)} 
	~\leq~ 
	\frac{2}{13(k-1)}.
\end{equation*}
Set
\begin{equation*}
y ~=~ c x^{1/14}(\log x)^{2/7} 
\phantom{mm}\text{and}\phantom{mm} 
z ~=~ c\frac{x^{1/7}}{(\log x)^{3/7}}.
\end{equation*}
From the prime factorization
\begin{equation*}\label{c1}
	\prod_{p \in S_{q,c}(x)} |a_f(p^{q-1})| 
	~=~ \prod_{\ell \leq y} \ell^{\nu_{x,\ell}},
\end{equation*}
we get
\begin{equation}\label{l=vxl}
	\sum_{p \in S_{q,c}(x)} \log|a_f(p^{q-1})| 
	~=~ \sum_{\ell \leq y} \nu_{x,\ell} \log \ell.
\end{equation}
As before, we have
\begin{equation*}
	\nu_{x, \ell} 
	~\leq~
	 \sum_{1 \leq m \leq \frac{\log(q x^{qk})}{\log \ell}}  \pi_{f,~q-1}^{*}(x, \ell^m).
\end{equation*}
 Fix a prime $\ell \leq y$ such that $\nu_{x, \ell} \neq 0$. 
 If $\ell \not\equiv 0, \pm 1 ~(\mod q)$, then as before, we have
 $$
 \nu_{x, \ell} \leq \nu_{\ell} (a_f(\ell^{q-1})) = O(kq).
 $$
Now suppose that $\ell \equiv 0, \pm 1 (\mod q)$ and set  $m_0 = \Big[\frac{\log z}{\log \ell}\Big]$.
Let $x$ be sufficiently large from now on.
Then applying \thmref{GRHpifmd}, \lemref{deq-1l} and \lemref{dq-1lm}, we get 
\begin{equation}\label{pif1}
\begin{split}
\sum_{1 \leq m \leq m_0}  
\pi_{f,~q-1}^{*}(x, \ell^m) 
&= \sum_{1 \leq m \leq m_0} 
\bigg\{ \d_{q-1}(\ell^m)\pi(x) 
~+~ 
O\( \d_{q-1}(\ell^m) \ell^{4m} x^{1/2} \log x \)\bigg\}\\
& ~\leq~
\frac{q-1}{2}\frac{\pi(x)}{\ell} 
~+~ 
O\(\frac{ q\pi(x)}{\ell^2}\) 
~+~ 
O\( q z^3 x^{1/2} \log x\).
\end{split}
\end{equation}
Again applying \thmref{GRHpifmd}, \lemref{deq-1l} and \lemref{dq-1lm}, we obtain
\begin{equation}\label{pif2}
\begin{split}
\sum_{m_0 < m \leq \frac{\log(q x^{qk})}{\log \ell}}  
\pi_{f,~ q-1}^{*}(x, \ell^m) 
&\leq~ 
\pi_{f,~ q-1}^{*}(x, \ell^{m_0}) 
\sum_{m_0 \leq m \leq \frac{\log(q x^{qk})}{\log \ell}} 1\\
& \ll~ 
\(\frac{ q \pi(x)}{\ell^{m_0}} 
~+~ 
q ~\ell^{3m_0} x^{1/2} \log x\)  \frac{q \log x}{\log \ell}\\
&\ll~
\frac{q^2 x}{z}\frac{\ell}{\log \ell} 
~+~ 
q^2 z^3 x^{1/2}\frac{(\log x)^2}{\log \ell}.
\end{split}
\end{equation}
From \eqref{pif1} and \eqref{pif2}, we get
\begin{equation}\label{pif3}
\nu_{x,\ell} 
~\leq~
\frac{q-1}{2}\frac{\pi(x)}{\ell}  
~+~ O\(\frac{ q\pi(x)}{\ell^2}  
~+~  \frac{q^2 x}{z}\frac{\ell}{\log \ell}
~+~  q^2 z^3 x^{1/2}\frac{(\log x)^2}{\log \ell}\).
\end{equation}
It follows from \eqref{pif3} and Brun-Titchmarsh inequality that
\begin{equation}\label{1pm1q}
\begin{split}
\sum_{\substack{\ell \leq y \\ \ell \equiv \pm 1(\mod q)}} 
\nu_{x,\ell} \log \ell 
~\leq~  
\pi(x) \log y 
+ c_2\( q \pi(x) + \frac{q x}{z}\frac{y^2}{\log (y/q)} 
+  q z^3 x^{1/2} (\log x)^2 \frac{y}{\log (y/q)}\)
\end{split}
\end{equation}
for all sufficiently large $x$ (depending on $f,q$). 
Here $c_2$ is a positive constant depending only on~$f$. We also have
\begin{equation}\label{vxqn01}
\begin{split}
\nu_{x,q} \log q 
~\ll~
  \pi(x) \log q~+~ \frac{q^3 x}{z} 
 ~+~ q^2 z^3 x^{1/2} (\log x)^2 \phantom{mm}\tx{and}\phantom{m}
\sum_{\substack{\ell \leq y \\ \ell \not\equiv 0, \pm 1 (\mod q)}} 
\nu_{x, \ell}\log \ell  
~\ll~ q y.
\end{split}
\end{equation}
Let us fix $c$ such that $11111 \cdot c_2 (c+c^4) < 1$. 
Then by substituting the values for $y$ and $z$ in \eqref{1pm1q} and \eqref{vxqn01}, we deduce that
\begin{equation}\label{vx113.5}
\sum_{\ell \leq y} \nu_{x, \ell} \log \ell 
~~<~ \frac{q-1}{13.5}~ x 
\end{equation}
for all sufficiently large $x$ (depending on $f,q$).

Let $q$ be  sufficiently large such that 
$q-1-c_1 \log (q-1)> 13 q/13.4$, then
from \lemref{Lafpm}, we get
\begin{equation}\label{lvx1}
\sum_{p \in S_{q,c}(x)} \log |a_f(p^{q-1})|
~\geq~
 \frac{13 q}{13.4}\frac{k-1}{2} 
 \sum_{p \in S_{q,c}(x)} \log p.
\end{equation}
 By partial summation, we have 
\begin{equation}\label{lvx2}
\sum_{p \in S_{q,c}(x)} \log p 
~=~ \#S_{q,c}(x)\log x + O \(\pi(x)\).
\end{equation}
Hence from \eqref{l=vxl}, \eqref{vx113.5}, \eqref{lvx1} and \eqref{lvx2}, we conclude that 
\begin{equation*}
	\# S_{q,c}(x) ~\leq~ \frac{2}{13 (k-1)} \pi(x)
\end{equation*}
for all sufficiently large $x$ depending on $f, q$. This completes 
the proof of first part of \thmref{thm2} for sufficiently large $q$. For small values of $q$, 
as in \S 5.2, we can show that
\begin{equation}\label{lSq}
\sum_{p \in S_{q,c}(x)} 
\log |a_f(p^{q-1})|
~\geq~
\frac{(q-1)(k-1)}{2} \#S_{q,c}(x) \log x 
~+~ O\( x \frac{\log\log x}{\sqrt{\log x}}\).
\end{equation} 
Hence from \eqref{l=vxl}, \eqref{vx113.5} and \eqref{lSq}, we deduce that
\begin{equation*}
	\# S_{q,c}(x) ~\leq~ \frac{2}{13 (k-1)} \pi(x)
\end{equation*}
for all sufficiently large $x$ depending on $f$. This completes the proof of first part of \thmref{thm2}.

\medskip

The proof of second part follows along the lines of the proof of first part of \thmref{thm2}. Without loss of generality, 
we can assume that $g(x)\log x \to \infty$ as $x \to \infty$, 
by replacing the function $g(x)$ by $g(x)+ \frac{1}{\log\log x}, ~x \geq 3$ if necessary.
Set
$$
S_q ~=~ \left\{p  ~:~  p\nmid N, ~ a_f(p^{q-1}) \neq 0, P(a_f(p^{q-1})) ~\leq~ p^{g(p)}  \right\}
$$
and $S_q(x) = \{p \leq x ~:~ p \in S_q\}$. Also set $h(x) = \max\left\{g(t) : \sqrt{x} \leq t \leq x\right\}, 
~y = x^{h(x)}$ and $z=y^2$. From the following prime factorization
\begin{equation*}
\prod_{\substack{p \in S_q(x) \\ p> \sqrt{x}}} |a_f(p^{q-1})| 
~=~
 \prod_{\ell \leq y} {\ell}^{\nu_{x, \ell}}~,
\end{equation*}
we get
\begin{equation*}
	\sum_{\substack{p \in S_{q}(x) \\ p > \sqrt{x}}} \log|a_f(p^{q-1})| 
	~=~ 
	\sum_{\ell \leq y} \nu_{x,\ell} \log \ell.
\end{equation*}
Now proceeding as before, we can show that
$$
\sum_{\ell \leq y} \nu_{x,\ell} \log \ell  
~\ll~
 x h(x) ~+~ q \pi(x) ~+~q \frac{x}{h(x) \log x}+ q^3 \frac{x}{e^{2 h(x)\log x}}
$$
and 
$$
\sum_{p \in S_{q}(x)} \log|a_f(p^{q-1})| 
~\gg~ q \#S_q(x)\log x ~+~ O\(qx \eta(x)\),
$$
where $\eta(x) = \log\log x / \sqrt{\log x}$. Also note that
 $$
 \sum_{\substack{p \in S_{q}(x) \\ p \leq \sqrt{x}}} \log|a_f(p^{q-1})| 
~\ll~ q \sqrt{x}.
$$
Hence we deduce that
$$
\#S_q(x) 
~\ll~ \pi(x)h(x) 
~+~ \frac{\pi(x) \log\log x}{\sqrt{\log x}} 
~+~ \frac{\pi(x)}{h(x)\log x} 
~+~ q^2 \frac{\pi(x)}{e^{2 h(x)\log x}}.
$$
This implies that
$$
\lim_{x \to \infty} \frac{\#S_q(x)}{\pi(x)} ~=~ 0.
$$
This completes the proof of \thmref{thm2}.

\section*{Acknowledgments}
The authors would like to thank Purusottam Rath for his comments on an earlier version
of the article and would like to thank the referee for suggesting an alternate proof of 
Lemma 15 and other valuable suggestions.
Also the authors would like to acknowledge the support of DAE number
theory plan project.

\end{document}